\documentclass[12pt]{article}
\usepackage{amsmath,amssymb,amstext,dsfont,fancyvrb,float,fontenc,graphicx,caption,subcaption,theorem,hyperref}
\usepackage[utf8]{inputenc}

\usepackage[letterpaper]{geometry}
\ifx\volno\undefined\def\volno{0}\fi
\ifx\volyear\undefined\def\volyear{2017}\fi
\ifx\pagno\undefined\def\pagno{000--000}\fi

\newfont{\footsc}{cmcsc10 at 8truept}
\newfont{\footbf}{cmbx10 at 8truept}
\newfont{\footrm}{cmr10 at 10truept}

\usepackage{fancyhdr}
\usepackage{relsize}
\usepackage{sectsty}
\allsectionsfont{\larger[-1]} 

\renewcommand\paragraph{\@startsection{paragraph}{4}{\z@}
                                    {2ex \@plus.5ex \@minus.2ex}
                                    {-1em}
                                    {\normalfont\normalsize\bfseries}}

\renewcommand\subparagraph{\@startsection{subparagraph}{5}{\parindent}
                                       {2ex \@plus.5ex \@minus .2ex}
                                       {-1em}
                                      {\normalfont\normalsize\bfseries}}

\newlength{\BiblioSpacing}
\renewenvironment{thebibliography}[1]{
\begin{oldthebibliography}{#1}
\setlength{\parskip}{\BiblioSpacing}
\setlength{\itemsep}{\BiblioSpacing}
}
{
\end{oldthebibliography}
}

\usepackage[strict]{changepage}
\def\abstractname{Abstract -}   
\def\abstract{\begin{adjustwidth}{1cm}{1cm} \par    \footnotesize \noindent {\bf \abstractname} 
\def\endabstract{ \end{adjustwidth} \smallskip }}

{\theorembodyfont{\itshape}\newtheorem{theorem}{Theorem}[section]}
{\theorembodyfont{\itshape}}
{\theorembodyfont{\itshape}\newtheorem{definition}[theorem]{Definition}}
{\theorembodyfont{\itshape}\newtheorem{lemma}[theorem]{Lemma}}
{\theorembodyfont{\itshape}}
{\theorembodyfont{\rm}}
{\theorembodyfont{\rm}\newtheorem{remark}[theorem]{Remark}}
{\theorembodyfont{\rm}}
{\theorembodyfont{\rm }}

\newcommand{\N}{\mathbb{N}}
\newcommand{\R}{\mathbb{R}}
\newcommand{\xstar}{x_*}
\newcommand{\rstar}{r_*}
\newcommand{\FD}{D}

\usepackage{multirow}

\title{\Large\bf Superconvergence of Centered Finite Difference Approximations}
\author{\sc 
M. J. Bencomo, 
J. Igot, and E. Maltes}

\begin{document}
\setcounter{page}{1}
\date{February 7, 2026}
\maketitle
\thispagestyle{fancy}

\vskip 1.5em

\begin{abstract}
The \textit{finite difference} (FD) method is commonly used to approximate derivatives of smooth functions, with accuracy typically determined by stencil size and derivative order. 
However, certain centered stencils exhibit unexpectedly higher accuracy, a phenomenon known as \textit{superconvergence}, which has been observed in practice but lacks rigorous explanation.
We present a mathematical framework for superconvergence in centered FD approximations based on Taylor expansions of the truncation error and the resulting linear system for the FD coefficients. 
By analyzing symmetry properties of these coefficients and their interaction with the parity of the derivative order, we identify conditions under which higher-order error terms cancel. 
We show that superconvergence occurs for odd-order derivatives with even centered stencils and for even-order derivatives with odd centered stencils, while no superconvergence occurs for even derivatives with even centered stencils. 
Numerical experiments in MATLAB confirm the predicted convergence rates.
\end{abstract}
 
\begin{keywords}
finite difference approximations; centered stencils; superconvergence; numerical analysis; Vandermonde systems
\end{keywords}

\begin{MSC}
65D25  
\end{MSC}

\section{Introduction} 

\textit{Finite difference} (FD) approximations estimate derivatives of a function by forming linear combinations of the function's values at selected discrete points.
For example, consider a function $f:(a,b)\to \mathbb R$ that is continuously differentiable over $(a,b)$ and let $\xstar\in(a,b)$.
The simplest FD approximation follows from the definition of the derivative, 
\begin{equation}\label{eq:simple_FD}
	f'(\xstar) \approx \frac{f(\xstar+h)-f(\xstar)}{h},
\end{equation}
where we do not take the limit as $h\to 0$ but instead evaluate the expression for some small $h>0$.
The FD approximation in (\ref{eq:simple_FD}) makes use of function values at $\xstar$ and $\xstar+h$, which are referred to as the \textit{stencil} of the FD approximation. 
Using a Taylor series expansion on $f(\xstar+h)$ centered at $\xstar$, it can be shown that
\begin{equation}\label{eq:simple_err}
	\left|f'(\xstar) - \frac{f(\xstar+h)-f(\xstar)}{h} \right| \le Ch
\end{equation}
for $h$ small enough, where $C>0$ is some constant independent of $h$.
Equation~(\ref{eq:simple_err}) implies that as we decrease $h$, the error decreases at least linearly in the limit as $h\to 0$. 
The power of $h$ in the error bound is called the \textit{accuracy order}.

In general, a minimum number of points
\begin{equation}\label{eq:conv_rule}
	N=k+r
\end{equation}
in the stencil is required to achieve an accuracy order of $r$ for the $k^{th}$ derivative.
\textit{Superconvergence} occurs when there are exceptions to this rule, that is, a convergence higher than $N-k$ is observed for an $N$-point FD approximation of the $k^{th}$ derivative. 
Superconvergence may occur for special functions at special points $\xstar$, or if the stencil is chosen in a special manner.
FD approximations with stencils ``centered'' about $\xstar$ are widely known among practitioners to exhibit superconvergence, though not formally proved in the literature as far as the authors are aware, with the exception of one paper by Sadiq and Viswanath (\cite{sadiq2014}).
Their approach is general in that they consider potentially complex stencils and demonstrate under what conditions one could expect an accuracy order of $N-k+b$, for a stencil with $N$ points for the $k^{th}$ derivative where $b$ is a positive integer.
In the case of real stencil points, they demonstrate that the accuracy order cannot be boosted by more than $1$.

\subsubsection*{Main Contribution} 
The primary purpose and contribution of this paper is to present an analysis tailored to a family of well-known centered FD approximations.
This specialized focus enables a proof strategy based entirely on linear algebra, thereby improving accessibility.
Moreover, it allows us to recover and clarify superconvergence phenomena that are obscured in more general frameworks.
Of note is the parity between derivative order $k$ and the number of stencil points $N$, and symmetry and skew-symmetry of coefficients in these approximations.
We prove the existence and uniqueness of the FD coefficients for a given stencil and use this structure to identify when cancellation of error terms leads to superconvergence.
Our main result shows that superconvergence occurs only when approximating odd-order derivatives with even centered stencils, and even-order derivatives with odd centered stencils. 

We begin by introducing notation and establishing standard convergence results for finite difference approximations.
We then describe the class of centered stencils considered in this work, define symmetric and skew-symmetric coefficients, and present our proofs for superconvergence.
Numerical experiments are presented next to illustrate and support the theoretical results.
Finally, we conclude by summarizing our findings.

\section{Theory} \label{sec: theory}
\label{sec:main}

We begin this section by establishing some notation and presenting a proof for standard convergence of FD approximations that will lay the ground work for our other results.

\subsection{Notation and standard convergence}

Let $f:(a,b)\to\R$ be a sufficiently differentiable function over some open interval $(a,b)$, $\xstar\in(a,b)$, and $k, N\in\N$ with $N-k\ge 1$.
Suppose that we wish to approximate $f^{(k)}(\xstar)$ using the following general $N$-point FD approximation,
\[
	f^{(k)}(\xstar) \approx \FD^kf(\xstar):= \sum_{n=1}^N c_n f(x_n).
\]
The set of points $\{x_n\}_{n=1}^N$ is referred to as the \textit{stencil} of the FD approximation and assumed to consist of distinct, ordered points contained in $(a,b)$;
\[
	a< x_1 < x_2 < \cdots  < x_{N-1} < x_{N} < b.
\]
In general, a stencil may not necessarily consist of uniformly spaced points, or include the point $\xstar$ for that matter.
Nonetheless, each stencil point $x_n$ may be expressed relative to $\xstar$ as
\[
	x_n = \xstar+a_n h, \quad \forall n=1,2,...,N,
\]
for some $a_n\in\R$ independent of $h>0$.
In the rest of the paper, we refer to $h$ as the \textit{characteristic size} of the stencil (or simply the \textit{stencil size}) and $\{a_n\}_{n=1}^N$ as the \textit{fundamental stencil} of the FD approximation.

We list some well known FD approximations using our notation.
\begin{itemize}
\item \textit{Forward FD approximation} to the first derivative, where $a_1=0$ and $a_2=1$:
\begin{align*}
	\FD^1f(\xstar) 
    	&= \frac{f(\xstar+h)-f(\xstar)}{h}\\
    	&= -\frac{1}{h}f(\xstar) + \frac{1}{h}f(\xstar+h).
\end{align*} 
\item \textit{Centered FD approximation} to the first derivative, where $a_1=-1$ and $a_2=1$:
\begin{align*}
	\FD^1f(\xstar) 
	&= \frac{f(\xstar+h)-f(\xstar-h)}{2h}\\
	&= -\frac{1}{2h}f(\xstar-h) + \frac{1}{2h}f(\xstar+h).
\end{align*}
\item \textit{Centered FD approximation} to the second derivative, where $a_1=-1$, $a_2=0$, and $a_3=1$:
\begin{align*}
	\FD^2f(\xstar) 
	&=\frac{f(\xstar+h)-2f(\xstar)+f(\xstar-h)}{h^2}\\
	&= \frac{1}{h^2}f(\xstar-h)-\frac{2}{h^2}f(\xstar) + \frac{1}{h^2}f(\xstar+h). 
\end{align*}
\end{itemize} 

\begin{theorem}\label{thm:stand_conv}
\text{\rm (Standard Convergence)}\\
Let $f:(a,b)\to\R$ be a sufficiently differentiable function over an open interval $(a,b)$, and $\xstar\in(a,b)$.
Suppose $k,N\in\N$ with $N-k\ge 1$.
Then, for a given fundamental stencil $\{a_n\}_{n=1}^N$ and stencil size $h>0$, there exists a unique set of coefficients $\{c_n\}_{n=1}^N$ such that
\begin{equation}\label{eq:FDappx}
	\FD^k f(\xstar) := \sum_{n=1}^N c_n f(\xstar+a_nh),
\end{equation}
is an FD approximation of $f^{(k)}(\xstar)$ with accuracy order of at least $N-k$.
In other words,
\[
	\left| \FD^k f(\xstar) - f^{(k)}(\xstar) \right| =\mathcal{O}(h^{N-k}).
\]
\end{theorem}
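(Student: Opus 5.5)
The plan is to Taylor expand each stencil value about $\xstar$ and translate the accuracy requirement into a square linear system for the coefficients. Assuming $f\in C^{N}$ near $\xstar$, Taylor's theorem with Lagrange remainder gives, for each $n$,
\[
	f(\xstar+a_nh)=\sum_{j=0}^{N-1}\frac{(a_nh)^j}{j!}f^{(j)}(\xstar)+\frac{(a_nh)^N}{N!}f^{(N)}(\xi_n)
\]
for some $\xi_n$ between $\xstar$ and $\xstar+a_nh$. Substituting into (\ref{eq:FDappx}) yields
\[
	\FD^kf(\xstar)=\sum_{j=0}^{N-1}\frac{h^j}{j!}f^{(j)}(\xstar)\sum_{n=1}^N c_na_n^j+\frac{h^N}{N!}\sum_{n=1}^N c_na_n^Nf^{(N)}(\xi_n).
\]

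Next we choose the coefficients so that the sum over $j$ collapses to exactly $f^{(k)}(\xstar)$. This requires
\[
	\sum_{n=1}^N c_na_n^j=\frac{k!}{h^k}\,\delta_{jk},\qquad j=0,1,\dots,N-1.
\]
These are $N$ equations in $N$ unknowns. Written as $Vc=\frac{k!}{h^k}e_{k+1}$, the matrix is $V_{jn}=a_n^{j}$, the transpose of a Vandermonde matrix. Because the stencil points are distinct, the $a_n$ are distinct, and $\det V=\prod_{m<n}(a_n-a_m)\neq0$. The system therefore has a unique solution. Since $k\le N-1$ by the hypothesis $N-k\ge1$, the index $j=k$ really is one of the rows, so the right-hand side is nonzero and well defined. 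Solving shows $c_n=h^{-k}\tilde c_n$, where the $\tilde c_n$ solve $V\tilde c=k!\,e_{k+1}$ and depend only on the fundamental stencil, not on $h$.

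For the error bound, substituting $c_n=h^{-k}\tilde c_n$ into the remainder gives
\[
	\bigl|\FD^kf(\xstar)-f^{(k)}(\xstar)\bigr|\le\frac{h^{N-k}}{N!}\sum_{n=1}^N|\tilde c_n||a_n|^N\sup_{|x-\xstar|\le h\max|a_n|}|f^{(N)}(x)|.
\]
This is $\mathcal O(h^{N-k})$ for $h$ small enough that the stencil stays in a compact subinterval of $(a,b)$ where $f^{(N)}$ is bounded.

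On uniqueness, we must make precise what the statement asserts: it is uniqueness among coefficient sets that annihilate the Taylor terms of orders $0,\dots,N-1$ other than order $k$, i.e.\ among solutions of the moment system. To justify that this is the right notion, we test the approximation on the monomials $(x-\xstar)^j$ for $j<N$. On these the error must vanish identically rather than merely be $\mathcal O(h^{N-k})$, because the error is a single power of $h$ times a constant, $h^j\sum_n c_na_n^j-k!\delta_{jk}$; with $c_n=h^{-k}\tilde c_n$ this becomes $h^{j-k}\bigl(\sum_n\tilde c_na_n^j-k!\delta_{jk}\bigr)$, and a nonzero multiple of $h^{j-k}$ with $j<N$ is not $\mathcal O(h^{N-k})$. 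Forcing these constants to vanish recovers the moment system, which gives uniqueness.

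The main obstacle is this uniqueness step, in particular ruling out $h$-dependent coefficients of more general form than $h^{-k}\tilde c_n$. I would handle it by stating uniqueness at the level of the linear system, which is clearly the intended reading. Existence and the error bound are routine once the Vandermonde invertibility is in hand.
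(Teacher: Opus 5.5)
Your proposal is correct and follows essentially the same route as the paper: Taylor expansion with Lagrange remainder, the Vandermonde moment system $A\mathbf c=\mathbf e_{k+1}$ under the scaling $c_n\propto h^{-k}$ with $h$-independent $\tilde c_n$, invertibility from the distinctness of the $a_n$, and a bound on the order-$N$ remainder. Your monomial test, which shows the moment equations are necessary within that scaling, and your local supremum of $|f^{(N)}|$ in place of the paper's maximum over the open interval $(a,b)$, are mild improvements in rigor; restricting uniqueness to the linear-system level is not just convenient but necessary, since replacing each $c_n$ by $c_n+h^{N-k}\delta_n$ for fixed $\delta_n$ still gives order at least $N-k$.
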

\begin{proof}
First, we expand each $f(\xstar+a_nh)$ in (\ref{eq:FDappx}) as a Taylor polynomial of degree $N-1$ centered at $\xstar$ with a remainder term in Lagrange form,
\begin{equation}\label{eq:TayExp}
	f(\xstar+a_nh) = 
	\sum_{m=0}^{N-1} f^{(m)}(\xstar)\frac{(a_nh)^m}{m!} + f^{(N)}(\xi_n)\frac{(a_n h)^{N}}{N!},
\end{equation}
for some $\xi_n$ between $\xstar$ and $\xstar+a_nh$.
Let $E(h)$ denote the FD error,
\[
	E(h) := \left| \FD^k f(\xstar) - f^{(k)}(\xstar) \right|.
\]
Plugging in Taylor expansions from (\ref{eq:TayExp}) into the definition of $\FD^k f(\xstar)$ and into $E(h)$ yields
\begin{equation}\label{eq:TayErr}
\begin{split}
	E(h) = \left| f^{(k)}(\xstar) 
	\left( \frac{h^k}{k!} \sum_{n=1}^N c_na_n^k -1 \right) \right.
        + \sum_{m=0,m\neq k}^{N-1} f^{(m)}(\xstar) \frac{h^m}{m!}\left(\sum_{n=1}^N c_n a_n^m\right) 
        \hspace{5em}  & \\
        \left.
        + \frac{h^{N}}{N!}\sum_{n=1}^N f^{(N)}(\xi_n)c_n a_n^{N}
        \right|.
\end{split}
\end{equation}

In order for the FD approximation to be consistent, it follows from (\ref{eq:TayErr}) that it will be necessary for
\begin{equation}\label{eq:Cond1}
	\frac{h^k}{k!} \sum_{n=1}^N c_na_n^k -1 = 0.
\end{equation}
Note that (\ref{eq:Cond1}) implies that $c_n$ will be proportional to $h^{-k}$.
To simplify the system of linear equations to come, we assume 
\begin{equation}\label{eq:tilde c}
	c_n = \tilde c_n \frac{k!}{h^k},
\end{equation}
for all $n=1,2,...,N$, for some $\tilde c_n$ independent of $h$.
The $\tilde c_n$ can be thought of as the stencil-size-independent coefficients of the FD approximation.

In terms of $\tilde c_n$, (\ref{eq:TayErr}) simplifies to
\begin{equation}\label{eq:TayErr2}
\begin{split}
	E(h) = \left|
	f^{(k)}(\xstar)
        \left( \sum_{n=1}^N \tilde c_n a_n^k -1 \right) \right.
        + \sum_{m=0,m\neq k}^{N-1} f^{(m)}(\xstar) \frac{h^{m-k}\; k!}{m!}\left(\sum_{n=1}^N \tilde c_n a_n^m\right) 
        \hspace{3em}  & \\
        \left.
        + \frac{h^{N-k}\; k!}{N!}\sum_{n=1}^N f^{(N)}(\xi_n) \tilde c_n a_n^{N}
    	\right|.
\end{split}
\end{equation}
For a stencil with $N$ points, we can eliminate the first $N$ terms in (\ref{eq:TayErr2}) by having $\tilde c_n$ satisfy the following system of linear equations expressed in matrix-vector form:
\[
	A\mathbf c = \mathbf e_{k+1}
\]
where
\begin{equation}\label{eq:MatA}
	A =
	\begin{bmatrix}
		1 & 1 & \cdots & 1\\
		a_1 & a_2 & \cdots & a_N\\
		a_1^2 & a_2^2 & \cdots & a_N^2\\
		\vdots & \vdots & \ddots & \vdots\\
		a_1^{N-1} & a_2^{N-1} & \cdots & a_N^{N-1}
	\end{bmatrix} \in\R^{N\times N},
	\;
	\mathbf c = \begin{bmatrix}
		\tilde c_1\\ \tilde c_2\\ \vdots \\ \tilde c_N
	\end{bmatrix} \in\R^{N}
\end{equation}
and $\mathbf e_{k+1}\in\R^{N}$ is the standard unit vector with $1$ in the $(k+1)^{th}$ entry and zeros elsewhere.
Note that $A$ matrix in (\ref{eq:MatA}) is a square Vandermonde matrix which is guaranteed to be invertible (though highly ill-conditioned as $N$ increases) given that $a_i\neq a_j$ for all $i,j=1,2,...,N$, for any fundamental stencil.\footnote{See Appendix~\ref{app:Vander} for definitions and results on Vandermonde matrices.}
Invertibility of $A$ thus implies the existence of unique coefficients $\{c_n\}_{n=1}^N$ resulting in 
\begin{equation} \label{eq:errStand}
	E(h) = \left| 
	\frac{h^{N-k}\; k!}{N!} \sum_{n=1}^N f^{(N)}(\xi_n)\tilde c_n a_n^N
	\right|.
\end{equation}
Lastly, one can bound the error as follows,
\[
	E(h) \le Ch^{N-k}
\]
where $C$ is independent of $h$,
\[
	C = \frac{k!}{N!} \sum_{n=1}^N \left| \tilde c_n a_n^N \right| \; \max_{x\in(a,b)}\left| f^{(N)}(x) \right|.
\]
\end{proof}

\begin{remark}
Big-O notation, as used in the statement of Theorem~\ref{thm:stand_conv}, guarantees a lower bound on the order of accuracy, but does not exclude the possibility of higher-order error cancellation under special conditions, such as superconvergence or for special functions.
We further emphasize this distinction in our analysis of superconvergence, since we prove not a lower bound but the exact order of accuracy.
\end{remark}

\subsection{Centered stencils and superconvergence}

Suppose the number of stencil points $N$ is odd, thus $N=2M+1$ for some $M\in\N$.
We re-index the stencil points $x_n$ such that $n$ varies from $-M$ to $M$.
If $N>1$ is even, we re-index in a similar manner but exclude $n=0$.
With this re-indexing, we define what it means for a stencil to be \textit{centered}.

\begin{definition}\label{def:cent_sten}
Let $N\in\N$, with $N=2M+1$ or $N=2M$ for some $M\in\N$. 
An $N$-point stencil $\{x_n\}$ with fundamental stencil $\{a_n\}$ is said to be {centered} if 
\[
	a_{-n} = -a_n, \quad \forall n=1,...,M,
\]
and, in the case $N$ is odd, $a_0=0$.
\end{definition}

If $N$ is odd, we refer to centered stencils as \textit{odd-centered}.
Likewise, if $N$ is even, we use the term \textit{even-centered}.
Note that $a_0=0$ implies $x_0=\xstar$.
The centered FD approximation to the first and second derivatives are examples of even-centered and odd-centered stencils, respectively.

Given our re-indexing for odd-centered and even-centered stencils, we introduce the concepts of \textit{symmetric} and \textit{skew-symmetric} coefficients which will come into play when proving superconvergence of centered FD approximations.

\begin{definition}\label{def:symm_coeff}
Let $N\in\N$, with $N=2M+1$ or $N=2M$ for some $M\in\N$.
The coefficients $\{c_n\}$ of an $N$-point centered FD approximation are said to be {symmetric} if 
\[
	c_{-n} = c_n, \;\forall n=1,2,...,M,
\]
or {skew-symmetric} if
\[
	c_{-n}=-c_n, \; \forall n=1,2,...,M.
\]
\end{definition}

\begin{remark}\label{rm:balanced}
Some authors, like Sadiq and Viswanath \cite{sadiq2014}, refer to stencils in our Definition~\ref{def:cent_sten} as \textit{symmetric}.
More broadly, they use the term \textit{centered} to refer to stencils such that
\begin{equation}\label{eq:balanced}
	\sum _{n=1}^{N} a_n=0 \; \implies \; \frac{1}{N}\sum_{n=1}^N x_n = \xstar.
\end{equation}
As to avoid confusion with our concept of symmetric coefficients, and to be consistent with the established use of  ``centered'' when referring to FD approximations, we stick to our usage of ``centered'' for stencils.
Moreover, we refer to stencils that satisfy (\ref{eq:balanced}) as \textit{balanced}.
\end{remark}

We now prove the superconvergence of centered FD approximations for the cases with opposite parity between derivative order and  number of stencil points.
We show that the order of accuracy is exactly $N-k+1$, and make use of Big-Theta notation.\footnote{See Appendinx~\ref{app:asymp} for more on Big-Theta notation and some useful lemmas.}

\begin{theorem} \label{thm:SCodd}
\text{\rm (Odd-derivative, even-centered stencil case)}\\
Let $f:(a,b)\to\R$ be a sufficiently differentiable function over an open interval $(a,b)$, and $\xstar\in (a,b)$.
Suppose $k=2\ell+1$ and $N=2M$ for some $\ell\in\N_0$ and $M\in\N$, assuming $N-k\ge 1$.
Then, for a given $N$-point, even-centered stencil, with fundamental stencil $\{a_n\}$ and stencil size $h>0$, there exists a unique set of coefficients $\{c_n\}$ such that 
\[
	\FD^k f(\xstar) := \sum_{n=1}^M\Big[ c_{-n}f(\xstar+a_{-n}h)+c_nf(\xstar+a_nh) \Big],
\]
is an FD approximation of $f^{(k)}(\xstar)$ with accuracy order exactly $N-k+1$, assuming $f^{(N+1)}(\xstar)\neq 0$.
In other words, 
\[
	\left| \FD^k f(\xstar) - f^{(k)}(\xstar) \right| = \Theta(h^{N-k+1}).
\]
Furthermore, $\{c_n\}$ must be skew-symmetric.
\end{theorem}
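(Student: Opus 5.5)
The plan is to obtain the coefficients from Theorem~\ref{thm:stand_conv}, use uniqueness to force skew-symmetry, use skew-symmetry to kill the order-$N$ error term, and then show the order-$(N+1)$ term survives.

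\emph{Existence, uniqueness and skew-symmetry.} Existence and uniqueness of $\{c_n\}$ come directly from Theorem~\ref{thm:stand_conv}: the scaled coefficients $\tilde c_n$ solve the Vandermonde system $A\mathbf c=\mathbf e_{k+1}$. To get skew-symmetry, I would define $\tilde c'_n := -\tilde c_{-n}$ and check that it solves the same system. Using $a_{-n}=-a_n$, for $0\le m\le N-1$,
\[
\sum_n \tilde c'_n a_n^m = -\sum_n \tilde c_{-n} a_n^m = -\sum_n \tilde c_n a_{-n}^m = (-1)^{m+1}\sum_n \tilde c_n a_n^m = (-1)^{m+1}\delta_{mk}.
\]
This equals $\delta_{mk}$ because $k$ is odd. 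Invertibility of $A$ then gives $\tilde c'=\tilde c$, so the coefficients are skew-symmetric.

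\emph{Vanishing of the order-$N$ term.} Skew-symmetry makes every even moment $\sum_n \tilde c_n a_n^{m}$ vanish. In particular the $m=N$ moment vanishes, since $N$ is even. So I would redo the expansion (\ref{eq:TayExp}) with Taylor polynomials of degree $N$ and Lagrange remainder of order $N+1$. The first $N$ moments vanish except the $k$-th (which equals $1$), and the $N$-th vanishes by parity. This leaves
\[
\FD^k f(\xstar)-f^{(k)}(\xstar) = \frac{h^{N-k+1}k!}{(N+1)!}\sum_n f^{(N+1)}(\xi_n)\tilde c_n a_n^{N+1}.
\]
Dividing by $h^{N-k+1}$ and letting $h\to0$, continuity of $f^{(N+1)}$ gives the limit $\frac{k!}{(N+1)!}f^{(N+1)}(\xstar)\,S$, where $S:=\sum_n \tilde c_n a_n^{N+1}$. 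The $\Theta(h^{N-k+1})$ claim then follows from the Big-Theta lemmas in the appendix, provided $S\neq0$.

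\emph{The main obstacle: showing $S\neq 0$.} I would reduce to a half-size Vandermonde problem. By skew-symmetry each odd moment doubles over the positive indices, so set $b_n:=a_n^2$ and $d_n:=2\tilde c_n a_n$ for $n=1,\dots,M$. Centering and distinctness make the $b_n$ distinct and positive. The odd moment equations, $m=2j+1$ for $j=0,\dots,M-1$, become
\[
\sum_{n=1}^M d_n b_n^j=\delta_{j\ell},
\]
and $S=\sum_{n=1}^M d_n b_n^M$.

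Next take $p(x)=\prod_{n=1}^M(x-b_n)=x^M+\sum_{j<M}p_jx^j$. Since $p(b_n)=0$, we have $\sum_n d_n p(b_n)=0$, which gives $S=-p_\ell$. The coefficient is $p_\ell=(-1)^{M-\ell}e_{M-\ell}(b_1,\dots,b_M)$, an elementary symmetric polynomial of positive numbers. Note $0\le M-\ell\le M$ because $N-k\ge1$ gives $\ell\le M-1$. Hence $p_\ell\neq0$, so $S\neq 0$, and together with $f^{(N+1)}(\xstar)\neq0$ this yields accuracy order exactly $N-k+1$.
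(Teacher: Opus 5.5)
Your proof is correct, and it takes a genuinely different route from the paper's.

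The paper embeds the stencil in a ``fake'' $(N+1)$-point odd-centered stencil with $a_0=0$ and posits skew-symmetric coefficients with $c_0=0$. It then reduces the moment equations to the $M\times M$ system with entries $a_j^{2i-1}$. Both solvability and $K_1\neq0$ come from the nonsingularity of generalized Vandermonde matrices with positive nodes, a total-positivity result cited from Gantmacher. Concretely, if the $(N+1)$st moment vanished, the matrix $B$ obtained from $\hat A$ by deleting the row with exponent $k$ and appending the row with exponent $N+1$ would be singular.

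You proceed differently in three places:
(i) You get skew-symmetry from the reflection $\tilde c'_n=-\tilde c_{-n}$, which solves the same invertible system because $k$ is odd. This proves the ``must be skew-symmetric'' clause directly, rather than by exhibiting a skew-symmetric solution and appealing to uniqueness.
(ii) You kill the $h^{N-k}$ term by parity, with no auxiliary stencil.
(iii) You substitute $b_n=a_n^2$, $d_n=2\tilde c_na_n$, which turns the odd-moment equations into an ordinary Vandermonde system in the $b_n$. The annihilating polynomial $\prod_n(x-b_n)$ then gives the closed form $S=(-1)^{M-\ell+1}e_{M-\ell}(a_1^2,\dots,a_M^2)$.

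Step (iii) needs only the classical Vandermonde determinant. It also yields the sign and size of the leading error constant: for example $S=1$ for the two-point centered difference, hence the familiar $h^2f'''(\xstar)/6$, and $S=-4$ for the four-point one. In effect you have evaluated the paper's $\det B$, a Vandermonde determinant with one missing exponent, in closed form.

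Your limit argument also requires only continuity of $f^{(N+1)}$ near $\xstar$, whereas the paper bounds a remainder involving $f^{(N+2)}$. The appendix lemma is not literally stated in your limit form, but $E(h)/h^{N-k+1}\to\frac{k!}{(N+1)!}\,|f^{(N+1)}(\xstar)\,S|>0$ gives the two-sided bound immediately.

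What the paper's formulation buys in return is the conceptual picture of superconvergence as an $(N+1)$-point scheme whose central weight vanishes. It also gives a single generalized-Vandermonde template that the paper reuses for the other parity cases.
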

\begin{proof}  
To prove superconvergence of even-centered FD approximations for odd-derivatives we make use of a ``fake'' odd-centered stencil.
Given an $N$-point, even-centered stencil, where $N=2M$, with fundamental stencil $\{a_{-n},a_n\}_{n=1}^M$, consider the resulting $(N+1)$-point odd-centered fundamental stencil given by $\{a_n\}_{n=-M}^M$ where $a_0=0$.
Let $\tilde \FD^k f(\xstar)$ denote the $(N+1)$-point FD approximation
\begin{equation}\label{eq:fake}
	\tilde\FD^k f(\xstar) := \sum_{n=-M}^M c_nf(\xstar+a_nh).
\end{equation}
We first show that there exists a unique set of skew-symmetric coefficients $\{c_n\}_{n=-M}^M$ with $c_0=0$ for which, according to Theorem~\ref{thm:stand_conv}, $\tilde\FD^k f(\xstar)$ is an approximation of order at least $(N+1)-k$.
Since $c_0=0$, this implies that $\tilde \FD^k f\equiv D^kf$.
In other words, the $(N+1)$-point FD approximation in reality has $N$ stencil points, thus implying superconvergence.
Skew-symmetry of the coefficients will come into play when showing the accuracy order does not exceed $N-k+1$.

Assume coefficients $\{c_n\}_{n=-M}^M$ are skew-symmetric, and $c_0=0$ for the ($N+1$)-point, odd-centered FD approximation $\tilde\FD^kf(\xstar)$ in (\ref{eq:fake}).
Similar to Theorem~\ref{thm:stand_conv}, for an $(N+1)$-point stencil, we can eliminate the first $N+1$ terms of the Taylor expansion of the error $E(h)$ by imposing a system of $N+1$ equations on the coefficients $c_n$ of the form
\begin{equation}\label{eq:big}
	A\mathbf c = \mathbf e_{k+1}
\end{equation}
where $\mathbf e_{k+1}\in \mathbb R^{N+1}$ and
\begin{equation}\label{eq:MatAOE}
	A =
	\begin{bmatrix}
		1 &\cdots & 1 &1 & 1 & \cdots & 1\\
		a_{-M}& \cdots& a_{-1}& a_0 & a_1 & \cdots & a_M\\
		a_{-M}^2& \cdots& a_{-1}^2& a_0^2 & a_1^2 & \cdots & a_M^2\\
		\vdots & \ddots & \vdots & \vdots & \vdots & \ddots & \vdots\\
		a_{-M}^{N}&\cdots & a_{-1}^{N}& a_0^N & a_1^{N} & \cdots & a_M^{N}
	\end{bmatrix}\in\mathbb R^{(N+1)\times (N+1)}, \;
	\mathbf c = \begin{bmatrix}
		\tilde c_{-M}\\  \vdots \\ \tilde c_{-1}\\ \tilde c_0\\ \tilde c_1\\ \vdots \\ \tilde c_{M}
	\end{bmatrix}\in\mathbb R^{N+1}.
\end{equation}
Recall, $c_n$ and $\tilde c_n$ are related via (\ref{eq:tilde c}).

Consider the $i^{th}$ component of the product $A\mathbf c$, which we can express as follows given our choice of indexing:
\begin{equation}\label{eq:Ac_i}
    [A\mathbf c]_i 
    = \sum_{n=-M}^M \tilde c_n a_n^{i-1}
    = \tilde c_0 a_0^{i-1} 
    + \sum_{n=1}^M \Big( \tilde c_{-n}a_{-n}^{i-1}+\tilde c_n a_{n}^{i-1}\Big)
\end{equation}
for $i=1,2,...,N+1$.
Then, from the skew-symmetry of the coefficients ($\tilde c_{-n}=-\tilde c_n$), $\tilde c_0 = 0$, and the centeredness of the fundamental stencil ($a_{-n}=-a_n$ and $a_0=0$), (\ref{eq:Ac_i}) implies 
\begin{subequations}
\begin{align}
	[A\mathbf c]_i 
	&= \sum_{n=1}^M \Big( -\tilde c_{n}a_{n}^{i-1}+\tilde c_n a_{n}^{i-1}\Big) = 0, \quad \text{for odd } i, \label{eq:Ac=0}\\
	[A\mathbf c]_i 
	&= 2\sum_{n=1}^M \tilde c_n a_{n}^{i-1}, \quad \text{for even } i. \label{eq:Ac=2}
\end{align}
\end{subequations}

From (\ref{eq:Ac=0}) we can infer that the equations with odd indexes in the original system (\ref{eq:big}) are automatically satisfied.
Using (\ref{eq:Ac=2}) we arrive at a smaller system of equations to determine $\{\tilde c_n\}_{n=1}^M$:
\begin{equation}\label{eq:small}
    \hat A \hat{\mathbf{c}}=\frac{1}{2}\mathbf e_{\ell+1}
\end{equation}
where $\mathbf e_{\ell+1}\in\mathbb R^{M}$ (recall $k=2\ell+1$) and
\[
    \hat A = 
    \begin{bmatrix} 
        a_1 & a_{2} & \cdots & a_{M} \\
        a_1^3 &a_2^3 & \cdots & a_M^3\\
        \vdots & \vdots &\ddots &\vdots & \\
        a_1^{2M-1} &a_2^{2M-1} & \cdots & a_M^{2M-1}\\  
    \end{bmatrix}\in\mathbb R^{M\times M}, \;
    \hat{\mathbf c} =
    \begin{bmatrix}
        \tilde c_1\\
        \tilde c_2\\
        \vdots\\
        \tilde c_M \\
    \end{bmatrix}\in\mathbb R^M.
\]
Note that $\hat A$ is a generalized Vandermonde matrix of the form
\[
	[\hat A]_{ij} = a_j^{z_i},
\]
with $0<a_1<a_2<\cdots < a_M$, where $z_i = 2i-1$ for $i=1,2,...,M$ and thus
\[
	0 \le z_1 < z_2 < \cdots < z_M.
\]
It follows from Lemma~\ref{lem:genVander} that $\hat A$ is invertible.
From the unique solution to reduced system (\ref{eq:small}), we can construct a unique set of skew-symmetric coefficients $\{c_n\}_{n=-M}^M$ so that the original system (\ref{eq:big}) is satisfied.

To show that the accuracy order is exactly $N+1-k$, we look at the expansion of the error $E(h)$ up to $N+2$ terms, after having eliminated the first $N+1$ terms:
\begin{equation*}
    E(h) = \left| K_1h^{N+1-k} + K_2(h)h^{N+2-k} \right|
\end{equation*}
where
\[
	K_1 = \frac{k!}{(N+1)!}f^{(N+1)}(\xstar) \sum_{n=-M}^M \tilde c_n a_n^{N+1}
\]
is independent of $h$, and
\[
	K_2(h) = \frac{k!}{(N+2)!} \sum_{n=-M}^{M} f^{(N+2)}(\xi_n)\tilde c_na_n^{N+2}.
\]
The dependency of $K_2$ on $h$ is implicit via the $\xi_n$.
Using Lemma~\ref{lem:BigTheta2}, it suffices to show that $K_1\neq 0$ and $K_2(h)$ is bounded for small enough $h>0$.
In fact, if $h$ is small enough so that the resulting stencil in contained in $(a,b)$ then we can bound $K_2(h)$ as follows:
\[
	|K_2(h)| \le \frac{k!}{(N+2)!}\sum_{n=-M}^M |\tilde c_na_n^{N+2}| \; \max_{x\in(a,b)}\left|f^{(N+2)}(x)\right|.
\]

Assuming $f^{(N+1)}(\xstar)\neq 0$, using skew-symmetry of coefficients, centeredness of the stencil, and the fact that $N$ is even, we have that $K_1\neq 0$ if and only if
\begin{equation}\label{eq:cr}
    \sum_{n=-M}^M \tilde c_n a_n^{N+1} 
    = 2\sum_{n=1}^M \tilde c_n a_n^{N+1}
    = 2 \hat{\mathbf c}^T \mathbf \rstar
    \neq 0,
\end{equation}
where
\[
	\mathbf \rstar =
	\begin{bmatrix}
	a_1^{N+1} &a_2^{N+1} & \cdots &a_M^{N+1}
	\end{bmatrix}^T.
\] 
Recall that the matrix $\hat A\in\R^{M\times M}$ was shown to be invertible, and hence $\{\mathbf r_n\}_{n=1}^M$ forms a basis for $\R^M$, where $\mathbf r_n$ is the $n^{th}$ row vector of $\hat A$.
It follows that there exists a unique $\mathbf d\in\R^M$ such that
\[
	\mathbf \rstar = \hat A^T\mathbf d
\]
Moreover, since $\hat A\hat{\mathbf c}=\mathbf e_{\ell+1}$, we have
\[
	\hat{\mathbf c}^T\mathbf \rstar = \hat{\mathbf c}^T\hat A^T\mathbf d = \mathbf e_{\ell+1}^T\mathbf d= d_{\ell+1}.
\]
We show that $K_1\neq 0$ by showing that $d_{\ell+1}\neq 0$.

Suppose, as to get a contradiction, that $d_{\ell+1}=0$, which would imply that $\mathbf \rstar$ is in the span of $\{\mathbf r_n\}_{n\neq\ell+1}$.
Construct the matrix $B\in\R^{M\times M}$ by removing the $(\ell+1)^{th}$ row of $\hat A$ and concatenating the row $\mathbf \rstar^T$.
In other words, 
\[
	B = \begin{bmatrix}
		\hat A^{[\ell+1]}\\ \mathbf \rstar^T
	\end{bmatrix}
\]
where $\hat A^{[\ell+1]}\in\R^{(M-1)\times M}$ is the matrix $\hat A$ with its $(\ell+1)^{th}$ row removed.
Note that $B$ is not invertible since its last row is linearly dependent on the previous rows, assuming $d_{\ell+1}=0$.
However, $B$ is also a generalized Vandermonde matrix of the form
\[
	[B]_{ij} = a_j^{z_i}
\]
with $0<a_1<a_2<\cdots < a_M$, where $z_M = N+1$. 
Since
\[
	0<2i-1< N+1, \quad \forall i=1,2,...,M,
\]
this implies $0 \le z_1 < z_2 < \cdots < z_M$.
It follows from Lemma~\ref{lem:genVander} that $B$ must be invertible, and hence our contradiction.
\end{proof}

\begin{theorem} \label{thm:SCeven}
\text{\rm (Even-derivative, odd-centered stencil case)}\\
Let $f:(a,b)\to\R$ be a sufficiently differentiable function over an open interval $(a,b)$, and $\xstar\in(a,b)$. 
Suppose $k=2\ell$ and $N=2M+1$ for some $\ell, M \in \N$, assuming $N-k\ge 1$. 
Then, for a given $N$-point stencil, with fundamental stencil $\{a_n\}$ and stencil size $h>0$, there exists a unique set of coefficients $\{c_n\}$ such that 
\begin{equation}\label{eq:FD_Nodd}
	\FD^k f(\xstar) := \sum_{n=-M}^Mc_nf(\xstar+a_nh ),
\end{equation}
is an FD approximation of $f^{(k)}(\xstar)$ with accuracy order exactly $N-k+1$, assuming $f^{(N+1)}(\xstar)\neq 0$.
In other words, 
\[
	\left| \FD^k f(\xstar) - f^{(k)}(\xstar) \right| = \Theta(h^{N-k+1}).
\]
Furthermore, $\{c_n\}$ must be symmetric.
\end{theorem}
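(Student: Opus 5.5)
Unlike the proof of Theorem~\ref{thm:SCodd}, no ``fake'' stencil is needed here, since the odd-centered stencil already contains $a_0=0$. The plan is to show that the unique coefficients given by Theorem~\ref{thm:stand_conv} are symmetric. Then the $(N+1)^{th}$ Taylor term vanishes automatically, and the $(N+2)^{th}$ term has a nonzero coefficient.

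\textbf{Existence and symmetry.} Theorem~\ref{thm:stand_conv} already gives a unique solution of $A\mathbf c=\mathbf e_{k+1}$ with $A\in\R^{N\times N}$ the Vandermonde matrix on $\{a_n\}_{n=-M}^M$. To show this solution is symmetric, I would use a reflection argument. Let $\mathbf c'$ be defined by $\tilde c'_n=\tilde c_{-n}$. Using $a_{-n}=-a_n$, we get $[A\mathbf c']_i=(-1)^{i-1}[A\mathbf c]_i$. Since $\mathbf e_{k+1}$ is nonzero only at $i=k+1$, and $k=2\ell$ is even, $(-1)^{k}=1$. Hence $A\mathbf c'=\mathbf e_{k+1}$, and uniqueness gives $\mathbf c'=\mathbf c$, i.e.\ the coefficients are symmetric.

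Alternatively, mirroring Theorem~\ref{thm:SCodd}, I could assume symmetry from the start. Then the even-indexed rows ($i-1$ odd) vanish automatically, and the odd-indexed rows reduce to $\tilde c_0+2\sum_{n\ge1}\tilde c_n a_n^{2j}=\delta_{j\ell}$ for $j=0,\dots,M$. Rows $j\ge1$ form the system $\hat A\hat{\mathbf c}=\tfrac12\mathbf e_\ell$ with $[\hat A]_{ij}=a_j^{2i}$, exponents $2<4<\dots<2M$. This matrix is invertible by Lemma~\ref{lem:genVander}, and $\tilde c_0$ is then fixed by the $j=0$ row.

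\textbf{Extra order.} Expand the Taylor series to order $N+1$ with a Lagrange remainder, as in Theorem~\ref{thm:SCodd}. This gives $E(h)=|K_0h^{N-k}+K_1h^{N-k+1}+K_2(h)h^{N-k+2}|$. Here $K_0\propto f^{(N)}(\xstar)\sum_n\tilde c_na_n^N$, and this sum vanishes because $N$ is odd, the coefficients are symmetric, $a_{-n}^N=-a_n^N$, and $a_0=0$. Also $K_2(h)$ is bounded, exactly as before. By Lemma~\ref{lem:BigTheta2}, it remains to show $K_1\propto f^{(N+1)}(\xstar)\sum_n\tilde c_na_n^{N+1}\neq0$. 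Since $N+1=2M+2$ is even and $a_0^{N+1}=0$, this sum equals $2\hat{\mathbf c}^T\mathbf r_*$ with $\mathbf r_*=(a_1^{N+1},\dots,a_M^{N+1})^T$.

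\textbf{Main obstacle: nondegeneracy.} As in Theorem~\ref{thm:SCodd}, write $\mathbf r_*=\hat A^T\mathbf d$, so that $\hat{\mathbf c}^T\mathbf r_*=\tfrac12 d_\ell$. If $d_\ell=0$, then replacing row $\ell$ of $\hat A$ by $\mathbf r_*^T$ yields a singular matrix. But that matrix is a generalized Vandermonde matrix with distinct exponents drawn from $\{2,4,\dots,2M\}\setminus\{2\ell\}\cup\{2M+2\}$, and these are increasing after reordering (reordering rows only changes the sign of the determinant). Its nodes satisfy $0<a_1<\dots<a_M$, so Lemma~\ref{lem:genVander} gives a contradiction. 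The point needing care is that the $\tilde c_0$ column drops out of this reduced system: every row with $j\ge1$ has $a_0^{2j}=0$, so the reduction to the $M\times M$ system in $\hat{\mathbf c}$ is legitimate.
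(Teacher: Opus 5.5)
Your proof is correct. Its second half is the argument the paper leaves to ``proceeds analogously to Theorem~\ref{thm:SCodd}'': the $h^{N-k}$ term vanishes by parity, $K_2(h)$ is bounded, and $K_1\neq 0$ follows from the generalized Vandermonde contradiction with exponents $\{2,4,\dots,2M\}\setminus\{2\ell\}\cup\{2M+2\}$. You have filled it in correctly.

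Where you genuinely differ is the symmetry step. The paper assumes symmetric coefficients, collapses the system to a half-size one, asserts it is invertible, and then uses uniqueness from Theorem~\ref{thm:stand_conv} to identify this symmetric solution with the solution. Your reflection identity $[A\mathbf c']_i=(-1)^{i-1}[A\mathbf c]_i$ gets ``must be symmetric'' directly from that uniqueness and the parity of $k$, with nothing to solve. The same computation with $(-1)^k=-1$ gives skew-symmetry for odd $k$, and hence $\tilde c_0=0$ on odd-centered stencils. It therefore handles the symmetry claims of all three theorems uniformly.

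What the reflection argument does not remove is the reduced system. Your nondegeneracy step, as written, writes $\mathbf r_*=\hat A^T\mathbf d$ and $\hat{\mathbf c}^T\mathbf r_*=\tfrac12 d_\ell$. That uses both $\hat A\hat{\mathbf c}=\tfrac12\mathbf e_\ell$ and invertibility of the $M\times M$ matrix $\hat A$. So your ``alternative'' paragraph is a required ingredient, not an optional one.

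Your handling of that reduction is also cleaner than the paper's. The paper's displayed $\hat A$ is labelled $(M+1)\times(M+1)$ but has only $M$ columns, and its odd-row formula counts $\tilde c_0$ twice in the first row. You drop the $\tilde c_0$ column for rows $j\ge 1$ and recover $\tilde c_0$ from the $j=0$ row. What remains is exactly the positive-node generalized Vandermonde matrix to which Lemma~\ref{lem:genVander} applies.
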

\begin{proof} 
Unlike Theorem~\ref{thm:SCodd}, we do not make use of a ``fake'' stencil to prove superconvergence.
Instead, we show that $N$-point FD approximation must have symmetric coefficients, and that superconvergence is achieved due to that symmetry.

Assume coefficients $\{c_n\}_{n=-M}^M$ are symmetric for the $N$-point, odd-centered FD approximation in (\ref{eq:FD_Nodd}).
For an $N$-point approximation, we can eliminate the first $N$ terms in the Taylor expansion of the error $E(h)$ by imposing a system of $N$ equations for the coefficients $c_n$ of the form
\begin{equation}\label{eq:Ac=e,even}
	A\mathbf c = \mathbf e_{k+1},
\end{equation}
just as in Theorem~\ref{thm:SCodd}, but with $\mathbf e_{k+1}\in\R^N$ and 
\begin{equation}\label{eq:MatAEO}
	A =
	\begin{bmatrix}
		1 &\cdots &1 & 1  & 1 & \cdots & 1\\
		a_{-M}& \cdots& a_{-1}&a_0 & a_1  & \cdots & a_M\\
		a_{-M}^2& \cdots&  a_{-1}^2&a_0^2 & a_1^2  & \cdots & a_M^2\\
		\vdots & \ddots & \vdots & \vdots & \vdots& \ddots & \vdots\\
		a_{-M}^{N-1}&\cdots & a_{-1}^{N-1} &a_0^{N-1} & a_1^{N-1} & \cdots & a_M^{N-1}
	\end{bmatrix} \in\R^{N\times N},
	\;
	\mathbf c = \begin{bmatrix}
		\tilde c_{-M}\\ \vdots \\ \tilde c_{-1}\\ \tilde c_0\\ \tilde c_1\\ \vdots \\ \tilde c_{M}\\
	\end{bmatrix}\in\R^N.
\end{equation}
Recall, $c_n$ and $\tilde c_n$ are related via (\ref{eq:tilde c}).

Consider the $i^{th}$ component of the product $A\mathbf c$, which is given by (\ref{eq:Ac_i}), for $i=1,2,...,N$.
By symmetry of the coefficients ($\tilde c_{-n}=\tilde c_n$), and centeredness of the fundamental stencil ($a_{-n}=-a_n$ and $a_0=0$), we have
\begin{subequations}
\begin{align}
	[A\mathbf c]_i &= \tilde c_0a_0^{i-1} + \sum_{n=1}^M \Big( -\tilde c_{n}a_{n}^{i-1}+\tilde c_n a_{n}^{i-1}\Big) = 0, \quad \text{for even }\; i, \label{eq:Ac=0 even}\\
	[A\mathbf c]_i &= 2\sum_{n=0}^M \tilde c_n a_{n}^{i-1}, \quad \text{for odd } \; i. \label{eq:Ac=2 even}
\end{align}
\end{subequations}

From (\ref{eq:Ac=0 even}) we can infer that the equations with even indexes in in the original system (\ref{eq:Ac=e,even}) are automatically satisfied.
Using (\ref{eq:Ac=2 even}) we construct a smaller system of equations to determine $\{\tilde c_n\}_{n=0}^M$:
\begin{equation}\label{eq:hatAeven}
	\hat A \hat{\mathbf c} = \frac{1}{2}\mathbf e_{\ell+1}
\end{equation}
where $\mathbf e_{\ell+1}\in\R^{M+1}$ (recall $k=2\ell$) and
\begin{equation} \label{mtrx:Atdl}
	\hat A = 
	\begin{bmatrix} 
		1 &1 &\dots &1\\
		a_1^2 & a_{2}^2 & \cdots & a_{M}^2 \\
		\vdots & \vdots &\ddots &\vdots & \\
		a_1^{2M} & a_{2}^{2M} & \cdots & a_{M}^{2M}\\  
	\end{bmatrix}\in\R^{(M+1)\times (M+1)},\;
	\tilde{\mathbf c} =
	\begin{bmatrix}
		\tilde c_0\\ \tilde c_1\\ \vdots\\ \tilde c_M
	\end{bmatrix}\in\R^{M+1}.
\end{equation}
Just as is Theorem~\ref{thm:SCodd}, it follows that $\hat A$ is invertible.

From the unique solution of the reduced system (\ref{eq:hatAeven}), we can construct a unique set of symmetric coefficients $\{c_n\}_{n=-M}^M$ that satisfy the original system (\ref{eq:Ac=e,even}).
Consequently, by Theorem~\ref{thm:stand_conv}, the $N$-point FD approximation in (\ref{eq:FD_Nodd}) achieves an accuracy order of at least $N-k$.
Achieving an accuracy order of at least $N+k-1$ requires the elimination of the $(N+1)$st term in the Taylor expansion of the error $E(h)$, which imposes the condition
\begin{equation}\label{eq:extra}
	\sum_{n=-M}^M \tilde c_n a_n^{N} = \tilde c_0 a_0^N + \sum_{n=1}^M \Big( \tilde c_{-n}a_{-n}^N + \tilde c_n a_n^N\Big) =0.
\end{equation}
Since $N$ is odd, the fundamental stencil is centered, and the coefficients are symmetric, condition (\ref{eq:extra}) is automatically satisfied.

The remainder of the argument, establishing that the order of accuracy is exactly $N+1-k$ proceeds analogously to the proof of Theorem~\ref{thm:SCodd}, with the modifications that $N$ is odd and the coefficients $\{c_n\}$ are symmetric.
\end{proof}

\subsection{Non-superconvergence for matching parity cases}

In this section we prove that superconvergence is not possible if the parity between derivative order and number of stencil points is the same.
Symmetry, or skew-symmetry, of FD coefficients is still expected and determined by the parity of the derivative order.

\begin{theorem} 
Let $f:(a,b)\to\R$ be a sufficiently differentiable function over an open interval $(a,b)$, and $\xstar\in (a,b)$.
Suppose $k$ and $N$ are positive integers with matching parity, assuming $N-k\ge 1$.
Then, for a given $N$-point, centered stencil, with fundamental stencil $\{a_n\}$ and stencil size $h>0$, there exists a unique set of coefficients $\{c_n\}$ such that 
\[
	\FD^k f(\xstar) := \sum_{n=-M}^M c_nf(\xstar+a_nh), \; \text{(odd $N$)}
\]
or 
\[
	\FD^k f(\xstar) := \sum_{n=1}^M\Big[ c_{-n}f(\xstar+a_{-n}h)+c_nf(\xstar+a_nh) \Big],
 \; \text{(even $N$)}
\]
is an FD approximation of $f^{(k)}(\xstar)$ with accuracy order exactly $N-k$, assuming $f^{(N)}(\xstar)\neq 0$.
In other words, 
\[
	\left| \FD^k f(\xstar) - f^{(k)}(\xstar) \right| = \Theta(h^{N-k}).
\]
Furthermore, $\{c_n\}$ must be symmetric if $k$ is even or skew-symmetric if $k$ is odd.
\end{theorem}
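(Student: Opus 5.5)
Existence and uniqueness of the coefficients, and the lower bound $\mathcal O(h^{N-k})$, come directly from Theorem~\ref{thm:stand_conv}, since the full Vandermonde matrix $A$ in (\ref{eq:MatA}) is invertible. The plan is to show that the unique solution has the stated (skew-)symmetry, and then that the leading error coefficient is nonzero. For symmetry, I would use uniqueness rather than re-deriving a reduced system. Let $P$ be the permutation $n\mapsto -n$ on indices. Because the stencil is centered, column $-n$ of $A$ equals $D$ times column $n$, where $D=\mathrm{diag}((-1)^{i-1})$, so $AP=DA$. If $A\mathbf c=\mathbf e_{k+1}$, then
\[
A(P\mathbf c)=DA\mathbf c=D\mathbf e_{k+1}=(-1)^k\mathbf e_{k+1}.
\]
Invertibility then gives $P\mathbf c=(-1)^k\mathbf c$. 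Hence the coefficients are symmetric for even $k$ and skew-symmetric for odd $k$. For odd $N$ and odd $k$ this also forces $\tilde c_0=0$.

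Following the proofs of Theorems~\ref{thm:SCodd} and \ref{thm:SCeven}, I would split $\mathbf c$ into its half $\hat{\mathbf c}$ and reduce to a system $\hat A\hat{\mathbf c}=\tfrac12\mathbf e_{j}$ with a generalized Vandermonde matrix. The four cases are as follows.
\begin{itemize}
\item $k$ odd, $N=2M$: exponents $1,3,\dots,2M-1$, unknowns $\tilde c_1,\dots,\tilde c_M$.
\item $k$ even, $N=2M$: exponents $0,2,\dots,2M-2$, same unknowns.
\item $k$ even, $N=2M+1$: exponents $0,2,\dots,2M$, unknowns $\tilde c_0,\dots,\tilde c_M$. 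This is the matrix of Theorem~\ref{thm:SCeven}.
\item $k$ odd, $N=2M+1$: $\tilde c_0=0$, exponents $1,3,\dots,2M-1$, unknowns $\tilde c_1,\dots,\tilde c_M$.
\end{itemize}
In each case $j$ is the position of the exponent $k$ in the list. Since $0<a_1<\dots<a_M$, Lemma~\ref{lem:genVander} gives invertibility of $\hat A$, exactly as before.

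Next, as in Theorem~\ref{thm:SCodd}, I would write
\[
E(h)=\left|K_1h^{N-k}+K_2(h)h^{N-k+1}\right|,
\]
where
\[
K_1=\frac{k!}{N!}f^{(N)}(\xstar)\sum_n\tilde c_na_n^N
\]
and $K_2(h)$ is bounded by the same $\max|f^{(N+1)}|$ argument. By Lemma~\ref{lem:BigTheta2}, it suffices to prove $\sum_n\tilde c_na_n^N\neq0$. Since $N\equiv k \pmod 2$, the term $\tilde c_na_n^N$ has the same parity behaviour under $n\mapsto-n$ as the row of index $k$. So the sum does not cancel and equals $2\hat{\mathbf c}^T\mathbf r_*$, with $\mathbf r_*=(a_1^N,\dots,a_M^N)^T$. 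In the odd-$N$, even-$k$ case the vector also carries an entry $a_0^N=0$ for $\tilde c_0$.

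The nonvanishing is the main step, and I would copy the contradiction argument from Theorem~\ref{thm:SCodd}. Write $\mathbf r_*=\hat A^T\mathbf d$, so that $\hat{\mathbf c}^T\mathbf r_*=\tfrac12 d_j$. If $d_j=0$, replace row $j$ of $\hat A$ by $\mathbf r_*^T$ to get a singular matrix $B$. The task is to check that $B$ is still a generalized Vandermonde matrix with strictly increasing exponents. This holds because $N$ has the same parity as all the exponents and is strictly larger than every one of them: the largest exponent is $N-2$ in each case. Reordering rows only changes the sign of $\det B$, so Lemma~\ref{lem:genVander} gives a contradiction.

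The care point is the odd-$N$, even-$k$ case. There the zeroth column corresponds to $a_0=0$, and the exponent-$0$ row contains $a_0^0=1$. I would need that Lemma~\ref{lem:genVander} allows the node $a_0=0$ together with exponent $z_1=0$, or else handle it directly. If $j\neq1$, expand $\det B$ along column $0$, which is $(1,0,\dots,0)^T$, to reduce to a positive-node generalized Vandermonde matrix. If $j=1$, the case $k=0$ is excluded since $k\ge1$ is even, so this subcase does not arise.
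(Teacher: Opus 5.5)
Your proof is correct. Its core matches the paper: you reduce to a half-size generalized Vandermonde system and show $\sum_n\tilde c_na_n^N=2\hat{\mathbf c}^T\mathbf r_*=d_j\neq0$ by replacing row $j$ of $\hat A$ with $\mathbf r_*^T$. Two pieces are organized differently.

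\emph{Symmetry.} The paper works by ansatz. It assumes symmetric (resp.\ skew-symmetric) coefficients, notes that half the equations then hold automatically, and solves the reduced system. Uniqueness of the full Vandermonde solution then forces the unique coefficients to have that symmetry. Your relation $AP=DA$ gives $P\mathbf c=(-1)^k\mathbf c$ directly. This is shorter and does not depend on the parity of $N$. It also yields $\tilde c_0=0$ for odd $N$, odd $k$ at no cost, and delays any use of $\hat A$ until the nonvanishing step.

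\emph{Odd $N$, odd $k$.} The paper does not redo the computation here. It treats the given stencil as the ``fake'' stencil attached to the $(N-1)$-point even-centered stencil and applies Theorem~\ref{thm:SCodd} with $N-1$ in place of $N$, giving order $(N-1)-k+1=N-k$. That is more economical, and it shows this case is really superconvergence of an $(N-1)$-point rule. Your direct treatment essentially re-runs that proof. In exchange, it handles both matching-parity cases uniformly through one observation: the largest reduced exponent is $N-2$, which has the same parity as $N$.

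One clean-up: your ``care point'' and the remark about an entry $a_0^N=0$ concern odd $N$ with even $k$. That is an opposite-parity case excluded by the hypothesis. There $\sum_n\tilde c_na_n^N$ cancels identically and the order is $N-k+1$ by Theorem~\ref{thm:SCeven}. Two of your four listed reduced systems are likewise not needed. In the two cases that matter, the only nodes entering $\hat A$ and $B$ are $0<a_1<\dots<a_M$: there is no $a_0$ for even $N$, and $\tilde c_0=0$ for odd $N$, odd $k$. So Lemma~\ref{lem:genVander} applies directly. Your cofactor expansion along the $a_0=0$ column is what one would need to justify invertibility of $\hat A$ in Theorem~\ref{thm:SCeven}, not here.
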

\begin{proof} 
Suppose both $k$ and $N$ are odd, and $N=2M+1$ for some $M\in\N$.
Let $\tilde N = N-1$, which makes $\tilde N$ an even integer.
Just as in the proof for Theorem~\ref{thm:SCodd}, we can construct a ``fake'' $(\tilde N+1)$-point, centered stencil for which there exists a unique set of skew-symmetric coefficients $\{c_n\}_{n=-M}^M$ with $c_0=0$ such that the FD approximation has accuracy order of exactly $\tilde N-k+1$.
The ``fake'' stencil is the actual $N$-point stencil in question for which we have shown has an accuracy order of exactly $(N-1)+k+1=N-k$.

Now suppose that both $k$ and $N$ are even, so that $k=2\ell$ and $N=2M$ for some $\ell,M\in\N$.
We first show that the coefficients $\{c_n\}$ must be symmetric, using arguments similar to those in Theorem~\ref{thm:SCodd}.
Assume $\{c_n\}$ are symmetric for the $N$-point, even-centered FD approximation.
According to Theorem~\ref{thm:stand_conv}, this FD approximation has an accuracy order of at least $N-k$ if $\{c_n\}$ satisfy the following system of equations
\[
	A\mathbf c = \mathbf e_{k+1}
\]
where $\mathbf e_{k+1}\in\R^{N}$ and
\[
	A = \begin{bmatrix}
		1 & \cdots & 1 & 1 & \cdots & 1\\
		a_{-M} & \cdots & a_{-1} & a_{1} & \cdots & a_M\\
		a_{-M}^2 & \cdots & a_{-1}^2 & a_{1}^2 & \cdots & a_M^2\\
		\vdots & \ddots & \vdots & \vdots & \ddots & \vdots\\
		a_{-M}^{N-1} & \cdots & a_{-1}^{N-1} & a_{1}^{N-1} & \cdots & a_M^{N-1}
	\end{bmatrix}\in\R^{N\times N}, \;
	\mathbf c = \begin{bmatrix}
		\tilde c_{-M}\\ \vdots \\ \tilde c_{-1} \\ \tilde c_{1}\\ \vdots \\ \tilde c_{M}
	\end{bmatrix}\in\R^N.
\]
Note the absence of the zeroth index in these equations given that we are working with an even stencil.
Again, $c_n$ and $\tilde c_n$ are related via (\ref{eq:tilde c}).

Using symmetry of the coefficients ($c_{-n}=c_{n}$) and centered nature of the fundamental stencil ($a_{-n}=-a_n$ and $a_0=0$), the $i^{th}$ component of the product $A\mathbf c$ is given by (\ref{eq:Ac=0 even}) and (\ref{eq:Ac=2 even}) in Theorem~\ref{thm:SCeven}, for $i=1,2,...,N$.
In particular, symmetry of $\{c_n\}$ ensures that the even-indexed equations are satisfied.
The remainder of the proof proceeds as in Theorem~\ref{thm:SCeven}, where the coefficients, $\{c_n\}_{n=1}^{M}$ are uniquely determined by constructing a reduced Vandermonde-like system.

To show that the accuracy order is exactly $N-k$, we look at the expansion of the error $E(h)$ up to $N+1$ terms, after having eliminated the first $N$ terms:
\[
	E(h) = \left| K_1h^{N-k} + K_2(h)h^{N+1-k} \right|
\]
where
\[
	K_1 = \frac{k!}{N!}f^{(N)}(\xstar) \sum_{n=1}^M \Big( \tilde c_{-n} a_{-n}^N + \tilde c_n a_n^{N} \Big)
\]
is independent of $h$, and
\[
	K_2(h) = \frac{k!}{(N+1)!} \sum_{n=1}^{M} 
	\Big( 
		f^{(N+1)}(\xi_{-n})\tilde c_{-n}a_{-n}^{N+1} + 
		f^{(N+1)}(\xi_n)\tilde c_na_n^{N+1}
	\Big).
\]
Using Lemma~\ref{lem:BigTheta2}, $K_2(h)$ is bounded and $K_1\ne 0$ in the same manner as in Theorem~\ref{thm:SCodd}, where
\[
	|K_2(h)| \le \frac{k!}{(N+1)!}
	\sum_{n=1}^M 
		2|\tilde c_na_n^{N+1}| 
	\; \max_{x\in(a,b)}\left|f^{(N+1)}(x)\right|.
\]
Assuming $f^{(N)}(\xstar)\neq 0$, we have that $K_1\neq 0$ if and only if
\begin{align*}
	\sum_{n=1}^M \Big( \tilde c_{-n} a_{-n}^{N}  + \tilde c_n a_n^{N} \Big)
	= 2\sum_{n=1}^M \tilde c_n a_n^{N}
	= 2 \hat{\mathbf c}^T \mathbf \rstar
	\neq 0,
\end{align*}
where we have made use of $N$ being even and define
\[
	\hat{\mathbf c} = 
		\begin{bmatrix}
		\tilde c_1 & \tilde c_2&  \cdots & \tilde c_n
		\end{bmatrix}^T, \quad
	\mathbf \rstar = 
		\begin{bmatrix}
		a_1^N & a_2^N & \cdots & a_M^{N}
		\end{bmatrix}^T.
\]
The remainder of the proof follows Theorem~\ref{thm:SCodd} and \ref{thm:SCeven} by showing that $\mathbf \rstar$ has a component in the direction of $\tilde{\mathbf c}$ via contradiction.
\end{proof}

\section{Numerical Experiments}
\label{sec:experiments}

To validate our theoretical results we performed numerical convergence tests on several FD approximations in MATLAB. 
These tests consisted of plotting the FD error $E(h)$ as a function of successively halved values of the stencil size $h$. 
Per stencil, we compute the coefficients $\{c_n\}$ by setting up and solving the problem $A\mathbf c=\mathbf e_{k+1}$ as previously described in the proof of Theorem~\ref{thm:stand_conv}. 
Once the coefficients are obtained, they define the weights for each stencil point, which can then be used to approximate the derivative at various $h$ values. 

\subsection{Experimental Setup}
We used the test function $f(x)=$ cos($\pi x$) with a fixed evaluation point at  $\xstar = 0.3$, for which $f^{(k)}(\xstar)\neq0$ for any $k$.
The initial stencil size was $h_0 = 1/2$ and subsequently halved iteratively;
\[
	h_i = \frac{1}{2^{i+1}}, 
\]
for refinement index $i=0,1,...,8$.
We show results for $N$-point, centered FD approximations for the $k^{th}$ derivative, for several $k$ and $N$ values.
Specifically, we chose the following set of schemes:

\subsubsection*{$1^{st}$-Order Forward FD Approximations:} 
This is our baseline method, the family of \textit{forward} FD approximations of the form
\[
    \FD^k f(\xstar) = \sum_{n=0}^{N-1} c_n f(\xstar+a_nh),
\]
where 
\[
	0 = a_0 < a_1 < \cdots < a_{N-1}
\]
with $N=k+1$ for a given $k$.
Though we do not give a formal proof, it is well known that these $N$-point FD approximations are of 1st order and of standard convergence, thus requiring $N=k+1$. 

\subsubsection*{$r^{th}$-Order Centered FD Approximations:} 
These are the higher-order centered methods which appear in either of the two forms:
\begin{itemize}
    \item Even-centered approximations, with $N=2M$, for some $M\in\mathbb N$,
    \[
        \FD^k f(\xstar) = \sum_{n=1}^{M} \Big[c_{-n} f(\xstar+a_{-n}h)+c_n f(\xstar+a_nh)\Big].
    \] 

    \item Odd-centered approximations, with $N=2M+1$, for some $M\in\mathbb N$,
    \[
        \FD^k f(\xstar) = c_0f(\xstar+a_0h) + \sum_{n=1}^{M}\Big[ c_{-n} f(\xstar+a_{-n}h)+c_n f(\xstar+a_nh)\Big].
    \]
\end{itemize}

In Table~\ref{table:stencil-sketches} we include the fundamental stencils $\{a_n\}$ for all of the FD approximations used in our experiments; each row corresponds to a different approximation.
In our tables and plots, ``FN'' and ``CN'' will refer to the $N$-point forward and centered FD approximations, respectively.
Empty entries are to be interpreted as the fundamental stencil not defined for those indexes.
For simplicity, we take $a_n = n$.

\begin{table}[h!]
\centering
\begin{tabular}{ l@{\hskip 2pt} ||c@{\hskip 2pt}| c@{\hskip 2pt} | c@{\hskip 2pt} | c@{\hskip 2pt} | c@{\hskip 2pt} | c@{\hskip 2pt} | c@{\hskip 2pt} | c@{\hskip 2pt} | c@{\hskip 2pt} }
& $a_{-4}$ & $a_{-3}$ & $a_{-2}$ &$a_{-1}$ & $a_0$& $a_1$ & $a_2$ & $a_3$ & $a_4$ 
\\ \hline \hline
\textbf{F2}&& & & & 0 & 1 &  &  & \\ \hline
\textbf{F3}&& & & & 0 & 1 & 2 &  & \\ \hline
\textbf{F4}&& & & & 0 & 1 & 2 &  3 & \\ \hline
\textbf{F5}&& & & & 0 & 1 & 2 & 3 & 4\\ \hline
\textbf{C2}&& & & $-1$ &  & 1 &  & & \\ \hline
\textbf{C3}&& & & $-1$ & 0 & 1 &  & & \\ \hline
\textbf{C4}&& & $-2$ & $-1$ &  & 1 & 2 & & \\ \hline
\textbf{C5}&& & $-2$ & $-1$ & 0 & 1 & 2 & & \\ \hline
\textbf{C6}&& $-3$ & $-2$ & $-1$ & & 1 & 2 & 3 & \\ \hline
\textbf{C7}&& $-3$ & $-2$ & $-1$ & 0 & 1 & 2 & 3 & \\ \hline
\textbf{C8}& $-4$ & $-3$ & $-2$ & $-1$ &  & 1 & 2 & 3 & 4\\ \hline
\textbf{C9}& $-4$ & $-3$ & $-2$ & $-1$ & 0 & 1 & 2 & 3 & 4\\
\end{tabular}
\caption{Fundamental stencils for various forward and centered approximations.}
\label{table:stencil-sketches}
\end{table}

\subsection{Results}

The convergence plots in Figure~\ref{fig:convergence-plots} present errors $E(h)$ of various FD approximations for derivatives of order $k=1$ to $4$. 
They are plotted in log-log scale to clearly visualize the convergence behavior of each stencil. 
As the error decreases proportionally to a power of the stencil size, log-log transforms this power into a straight line whose slope corresponds to the observed accuracy order.
Overall, the results validate the theoretical orders of the regular convergent and superconvergent FD approximations. 

The forward FD approximations are shown to be $1^{st}$ order accurate, as to be expected given that for each $k$, $N=k+1$ resulting in $r=N-k=1$. 
For centered approximations of $r^{th}$ accuracy order, superconvergence was observed whenever $N$ and $k$ had opposite parity, yielding $r = N-k+1$.
Also, computed coefficients ${c_n}$ were symmetric or skew-symmetric in accordance to the parity of the derivative order.
In the case $N$ and $k$ were both odd, the resulting $N$-point FD approximation is equivalent to an even-centered $(N-1)$-point FD approximation since $c_0=0$.
This explains why certain error curves overlap in Figure~\ref{fig:convergence-plots} for odd derivatives.
For example, C2 and C3 overlap and likewise C4 and C5 for $k=1$.
For $N$ and $k$ even, the centered method exhibited regular convergence with accuracy order $r = N-k$.
Interestingly enough, these approximations were observed to yield higher errors when compared to superconvergent odd-centered approximations of equivalent order.

The specific approximate values of the convergence rates, that is the accuracy order, are tabulated in Table~\ref{fig:convergence_tables},  computed using the formula
\begin{equation}
	\text{Rate at refinement index }i = \frac{\text{ln}(E(h_{i+1}))-\text{ln}(E(h_{i}))}{\text{ln}(h_{i+1})-\text{ln}(h_{i})}.
\end{equation}
Refinement indices 4 to 6 were selected for analysis, as this range exhibits the most linear behavior in log-log scale.
Some deviations from linearity in the convergence plots appear at extreme values of $h$. 
The initial distortions (for larger $h$) are due to pre-asymptotic behavior, while the final distortions (for smaller $h$) can be attributed to well known loss-of-significance errors.

\begin{figure}[!ht]
  \centering
  \begin{minipage}{0.48\textwidth}
    \centering
    \includegraphics[width=\linewidth]{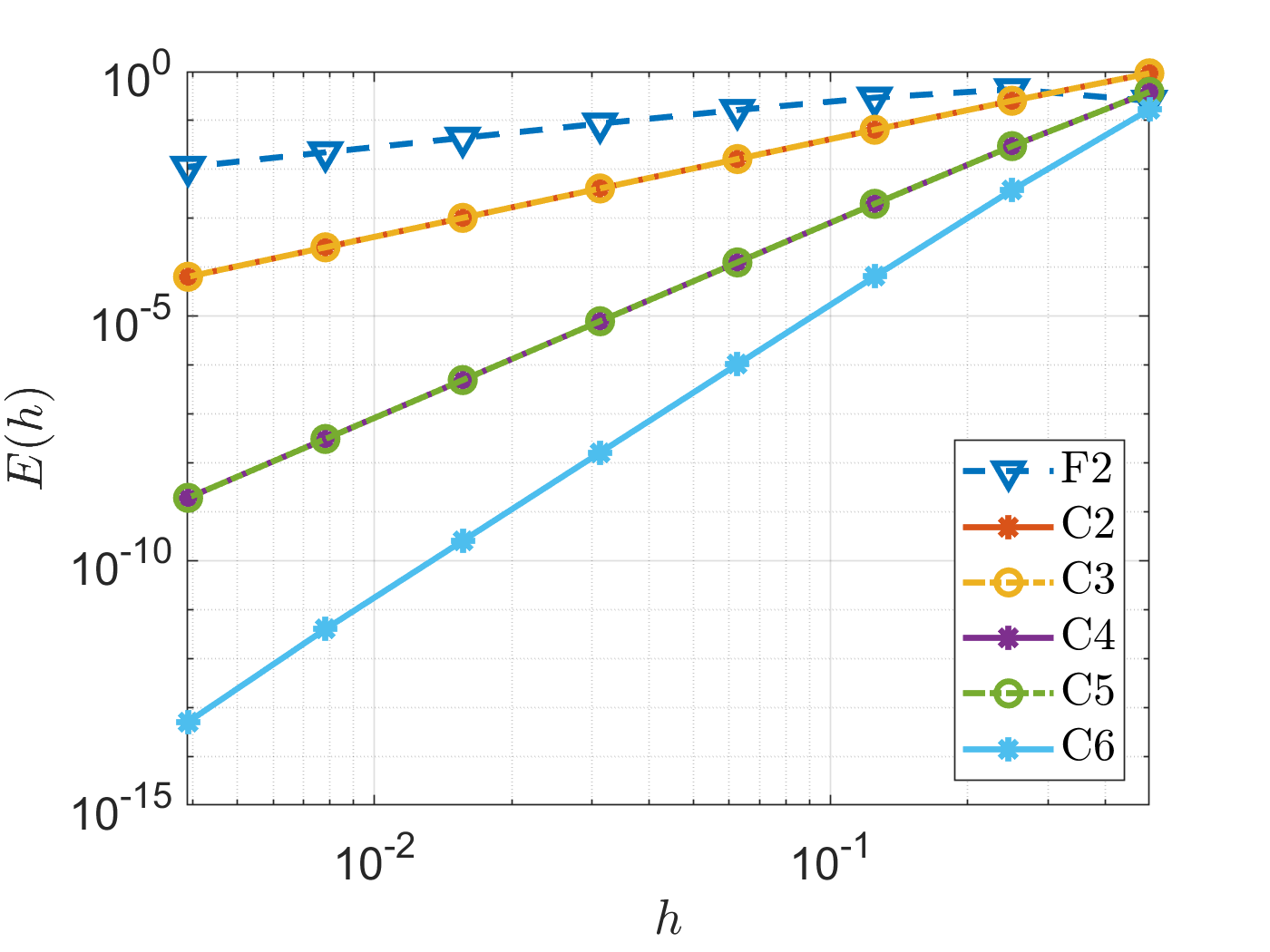}
    \subcaption{$k=1$}
  \end{minipage}
  \hfill
  \begin{minipage}{0.48\textwidth}
    \centering
    \includegraphics[width=\linewidth]{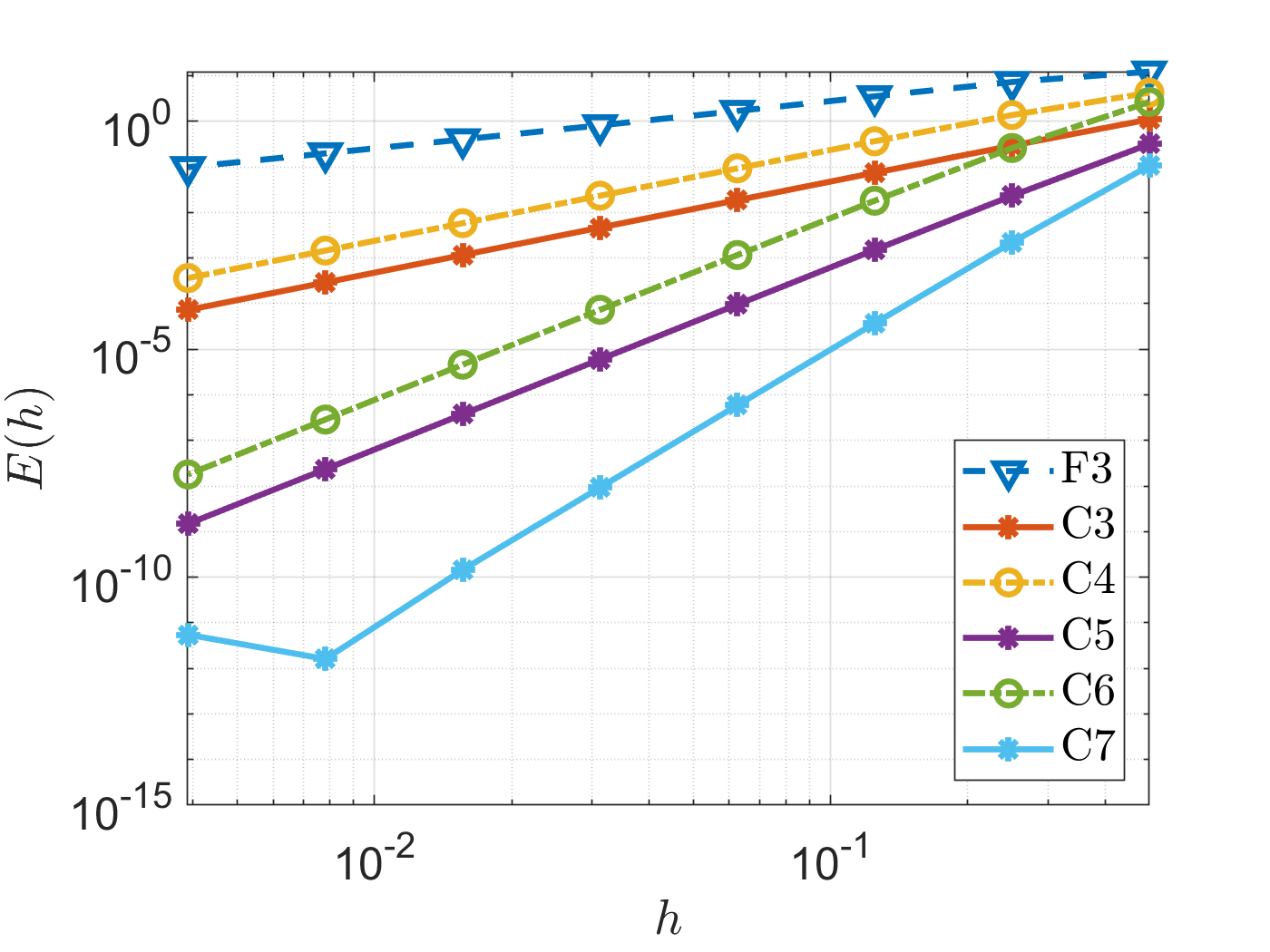}
    \subcaption{$k=2$}
  \end{minipage}

  \vspace{0.5em}

  \begin{minipage}{0.48\textwidth}
    \centering
    \includegraphics[width=\linewidth]{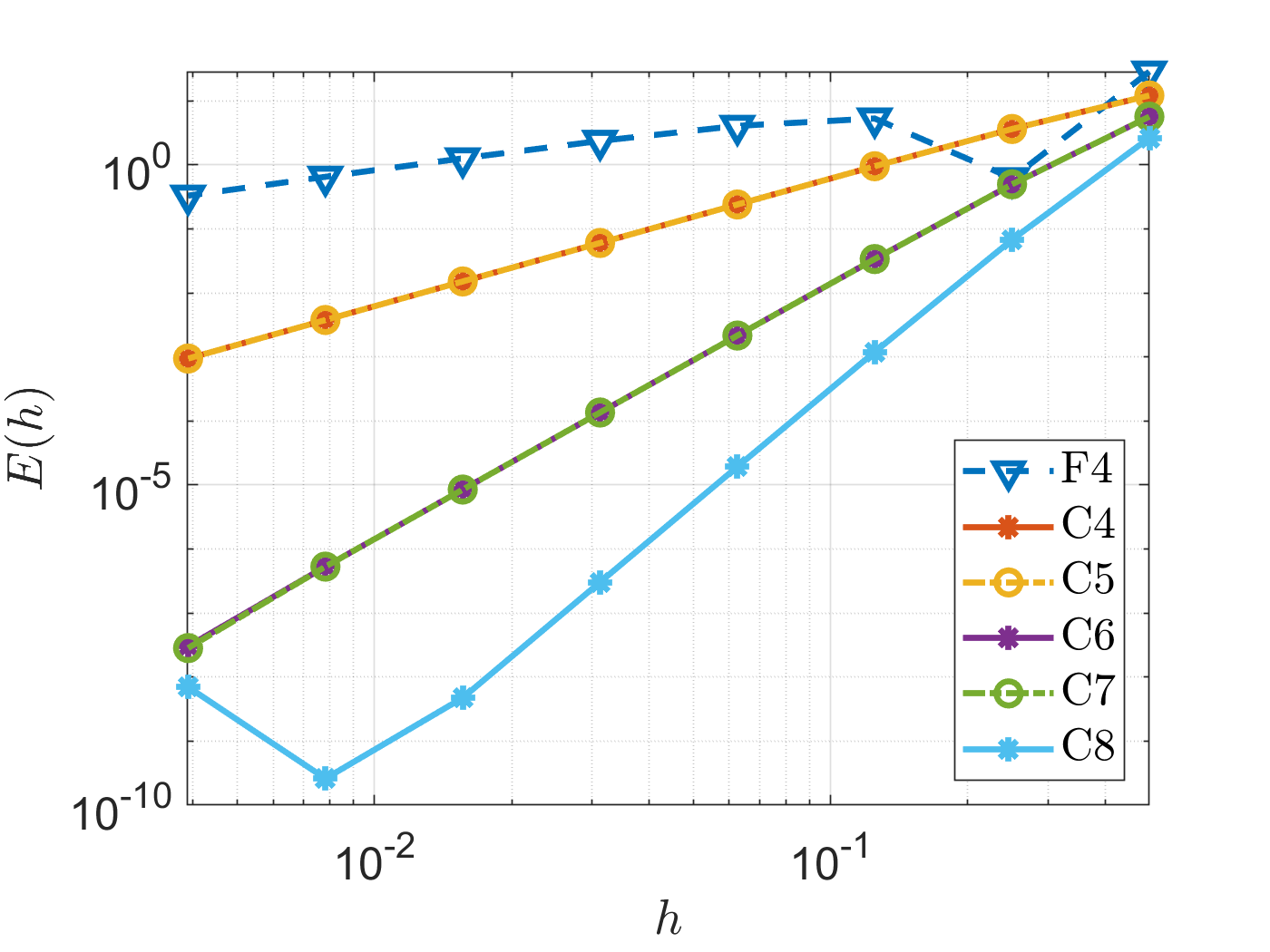}
    \subcaption{$k=3$}
  \end{minipage}
  \hfill
  \begin{minipage}{0.48\textwidth}
    \centering
    \includegraphics[width=\linewidth]{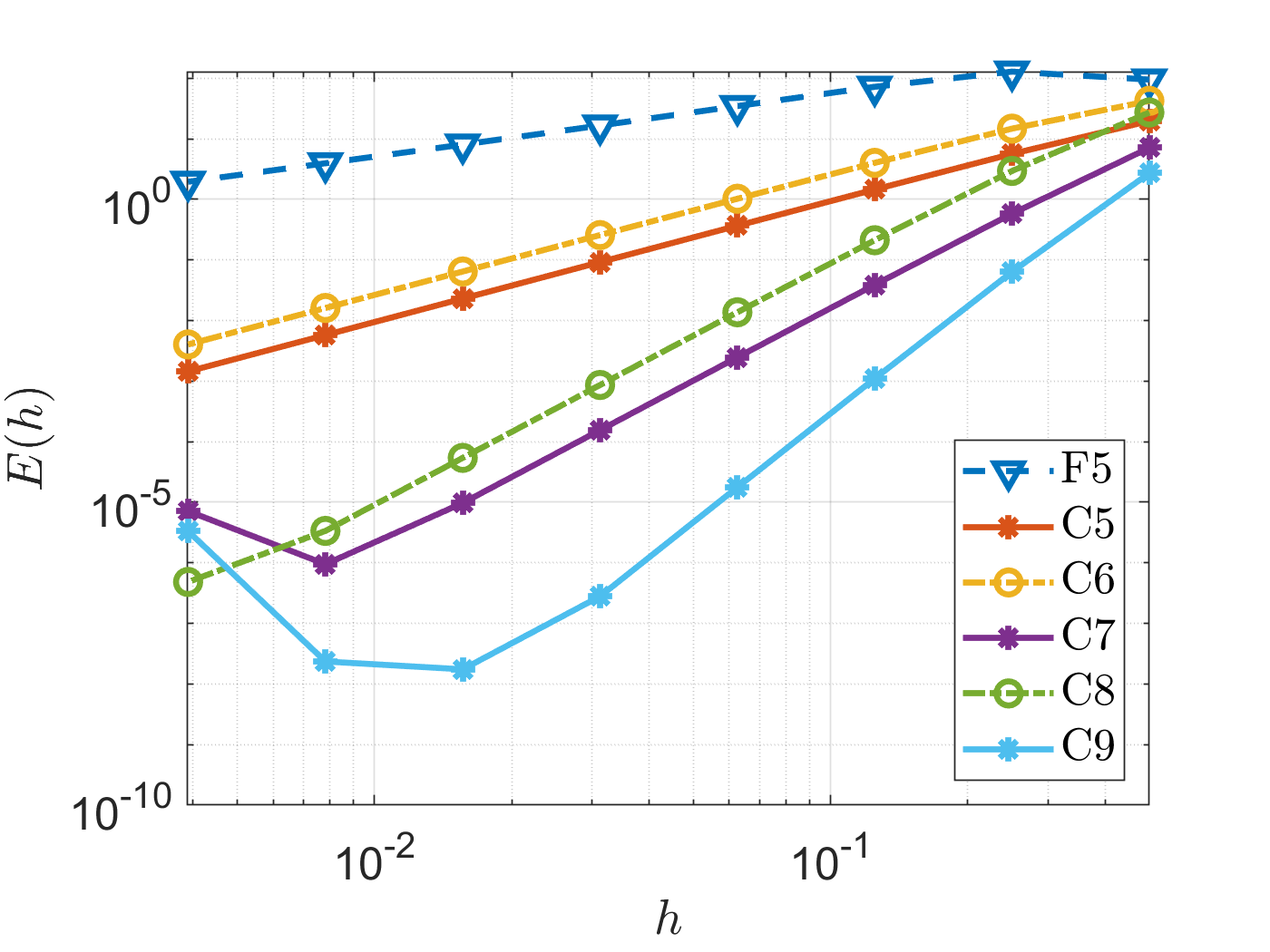}
    \subcaption{$k=4$}
  \end{minipage}
  \caption{FD error $E(h)$ versus stencil size $h$ of $k^{th}$ derivative.}
  \label{fig:convergence-plots}
\end{figure}

\begin{table}[!ht] 
\centering
\renewcommand{\arraystretch}{1.2}

\begin{subtable}[t]{0.45\textwidth}
    \centering
    \begin{tabular}{c|c|c|c|c|}
    \multicolumn{2}{r}{} & \multicolumn{3}{c}{\small\textbf{Refinement Index}} \\
    \cline{3-5}
    \multicolumn{2}{r|}{} & \textbf{4} & \textbf{5} & \textbf{6} \\
    \cline{2-5}
    \multirow{4}{*}{\rotatebox[origin=c]{90}{\small\textbf{Conv. Rate}}}
    & \textbf{F2} & 0.927 & 0.965 & 0.983 \\ \cline{2-5}
    & \textbf{C2} & 1.998 & 1.999 & 2.000 \\ \cline{2-5}
    & \textbf{C4} & 3.995 & 3.999 & 4.000 \\ \cline{2-5}
    & \textbf{C6} & 5.992 & 5.998 & 5.997 \\
    \cline{2-5}
    \end{tabular}
    \caption{k=1}
\end{subtable}
\hspace{1em}
\begin{subtable}[t]{0.45\textwidth}
    \centering
    \begin{tabular}{c|c|c|c|c|}
    \multicolumn{2}{r}{} & \multicolumn{3}{c}{\small\textbf{Refinement Index}} \\
    \cline{3-5}
    \multicolumn{2}{r|}{} & \textbf{4} & \textbf{5} & \textbf{6} \\
    \cline{2-5}
    \multirow{4}{*}{\rotatebox[origin=c]{90}{\small\textbf{Conv. Rate}}}
    & \textbf{F3} & 1.046 & 1.027 & 1.014 \\ \cline{2-5}
    & \textbf{C3} & 1.999 & 2.000 & 2.000 \\ \cline{2-5}
    & \textbf{C4} & 1.994 & 1.999 & 2.000 \\ \cline{2-5}
    & \textbf{C5} & 3.996 & 3.999 & 4.000 \\ \cline{2-5}
    & \textbf{C6} & 3.990 & 3.998 & 3.999\\ \cline{2-5}
    & \textbf{C7} & 5.993 & 5.996 & 6.500 \\
    \cline{2-5}
    \end{tabular}
    \caption{k=2}
\end{subtable}

\vspace{1em}

\begin{subtable}[t]{0.45\textwidth}
    \centering
    \begin{tabular}{c|c|c|c|c|}
    \multicolumn{2}{r}{} & \multicolumn{3}{c}{\small\textbf{Refinement Index}} \\
    \cline{3-5}
    \multicolumn{2}{r|}{} & \textbf{4} & \textbf{5} & \textbf{6} \\
    \cline{2-5}
    \multirow{4}{*}{\rotatebox[origin=c]{90}{\small\textbf{Conv. Rate}}}
    & \textbf{F4} & 0.780 & 0.906 & 0.956 \\ \cline{2-5}
    & \textbf{C4} & 1.996 & 1.999 & 2.000 \\ \cline{2-5}
    & \textbf{C6} & 3.992 & 3.998 & 3.999 \\ \cline{2-5}
    & \textbf{C8} & 5.989 & 6.001 & 4.190 \\
    \cline{2-5}
    \end{tabular}
    \caption{k=3}
    \end{subtable}
\hspace{1em}
\begin{subtable}[t]{0.45\textwidth}
    \centering
    \begin{tabular}{c|c|c|c|c|}
    \multicolumn{2}{r}{} & \multicolumn{3}{c}{\small\textbf{Refinement Index}} \\
    \cline{3-5}
    \multicolumn{2}{r|}{} & \textbf{4} & \textbf{5} & \textbf{6} \\
    \cline{2-5}
    \multirow{4}{*}{\rotatebox[origin=c]{90}{\small\textbf{Conv. Rate}}}
    & \textbf{F5} & 1.067 & 1.044 & 1.023 \\ \cline{2-5}
    & \textbf{C5} & 1.997 & 1.999 & 2.000 \\ \cline{2-5}
    & \textbf{C6} & 1.992 & 1.998 & 2.000 \\ \cline{2-5}
    & \textbf{C7} & 3.994 & 3.995 & 3.358 \\ \cline{2-5}
    & \textbf{C8} & 3.988 & 3.997 & 4.005 \\ \cline{2-5}
    & \textbf{C9} & 5.978 & 4.004 & -0.430 \\
    \cline{2-5}
    \end{tabular}
    \caption{k=4}
\end{subtable}

\caption{Convergence rates for different FD approximations across various refinement indices for the $k^{th}$ derivative.}
\label{fig:convergence_tables}
\end{table}

\section{Conclusions}
\label{sec:conclusions}

Accuracy order of an FD approximation is obtained by analyzing the Taylor series expansion of the FD error
\[
	E(h) := \left| \FD^{k}f(\xstar) - f^{(k)}(\xstar)\right|.
\]
An FD approximation has accuracy order at least $r$ if $E(h)=\mathcal O(h^r)$ and exactly $r$ if $E(h)=\Theta(h^r)$.
It is well understood that an $N$-point FD approximation to the $k^{th}$ derivative can achieve an accuracy order of at least $N-k$.
We have established several theoretical results demonstrating when superconvergence (an accuracy order of $N-k+1$) occurs for centered FD approximations, which we summarize in Table~\ref{table:theory}.
Our work shows that the parity of the derivative order $k$ dictates the symmetry (or skew-symmetry) of coefficients $\{c_n\}$, and the parity of the number of stencil points $N$ in order for superconvergence to be achieved.
In particular, $k$ and $N$ must be of opposite parity and $\{c_n\}$ are symmetric for even $k$ and skew-symmetric for odd $k$.
The case with $k$ and $N$ odd was interesting in that such FD approximations have $c_0=0$.
So standard convergence of these centered approximations can be interpreted as superconvergence for an $(N-1)$-point stencil.

\begin{table}[h!]
\centering
\begin{tabular}{c|| c | c }
	& Odd $k$ & Even $k$ \\ \hline \hline
\multirow{2}{*}{Odd $N$} & skew-symmetric $\{c_n\}$ & symmetric $\{c_n\}$\\
	& $r=N-k$ & $r=N-k+1$ \\ \hline
\multirow{2}{*}{Even $N$} & skew-symmetric $\{c_n\}$ & symmetric $\{c_n\}$\\
	& $r = N-k+1$ & $r=N-k$ 
\end{tabular}
\caption{Summary of theoretical accuracy order $r$ of centered FD approximations for derivative order $k$, number of stencil points $N$, and coefficients $\{c_n\}$.}
\label{table:theory}
\end{table}

By narrowing the scope of our analysis to centered FD approximations, we were able to demonstrate the role of (skew-)symmetry of $\{c_n\}$ in obtaining superconvergence.
Our analysis should be contrasted with similar work by \cite{sadiq2014} where they take a more general approach by considering, what we call, balanced\footnote{See Remark~\ref{rm:balanced} where we defined \textit{balanced} stencils and motivate our use of \textit{centered} stencils.} and potentially complex stencils.
The importance (skew-)symmetry of FD coefficients is no longer evident in this general approach.
However, similar to \cite{sadiq2014}, our results are sharp in that we disprove the possibility of higher accuracy order beyond $N-k+1$ for superconvergent approximations and an order of $N-k$ for regular convergence in the appropriate cases.

\appendix

\section{Asymptotic Notation}\label{app:asymp}
Big-O notation is commonly used to quantify the decay of FD errors with respect to the stencil size $h>0$ as $h\to0^+$, and in particular introduce the concept of convergence rates and accuracy orders; see \cite{atkinson2004, leveque2007} for definitions of big-O notation in the context of finite differences.
We point out, however, that big-O notation only implies a lower bound on the convergence rate.
In order to state a sharper result, where we can prove the exact convergence rate, we make use of big-Theta notation; \cite{knuth1997}.
For clarity and completeness, we provide a definition of big-O and big-Theta notation here.
We also prove intermediary results that are used in our main theorems.

\begin{definition}\label{def:BigO}
Let $f(h)$ and $g(h)$ be two real valued functions defined over $[0,a)$, for some $a>0$.
We write
\[
	f(h) = \mathcal O(g(h)) \; \text{as} \; h\to 0^+
\]
if there is some constant $C>0$ such that
\[
	|f(h)| \le C|g(h)|
\]
for all $0<h<a$ sufficiently small.
\end{definition}

\begin{definition}\label{def:BigTheta}
Let $f(h)$ and $g(h)$ be two real valued functions defined over $[0,a)$, for some $a>0$.
We write
\[
	f(h) = \Theta(g(h)) \; \text{as} \; h\to 0^+
\]
if there are some constants $C_*,C^*>0$ such that
\[
	C_*|g(h)| \le |f(h)| \le C^*|g(h)|
\]
for all $0<h<a$ sufficiently small.
\end{definition}

\begin{lemma}\label{lem:BigTheta2}
Suppose the error of an FD approximation is of the form
\begin{equation}\label{eq:err_rs}
	E(h) = |K_1h^r + K_2(h)h^{r+s}|
\end{equation}
where $r,s>0$, and constant $K_1\neq 0$ is independent of $h$.
Assume there exists $0< K_2^*<\infty$ independent of $h$ such that
\[
	|K_2(h)| < K_2^*
\]
for $h>0$ small enough.
It follows that 
\[
	E(h) = \Theta(h^r).
\]
\end{lemma}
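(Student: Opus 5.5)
The error has the form $E(h)=|K_1h^r+K_2(h)h^{r+s}|$ with $K_1\neq 0$, $r,s>0$, and $|K_2(h)|<K_2^*$. The plan is to use the triangle inequality, together with the fact that $h^{s}\to 0$ as $h\to 0^+$, to show that the perturbation term is eventually dominated by half of the leading term.

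\emph{Upper bound.} By the triangle inequality,
\[
	E(h)\le |K_1|h^r + |K_2(h)|h^{r+s} \le \big(|K_1| + K_2^* h^{s}\big)h^r .
\]
Restrict to $0<h<1$ (and to $h$ small enough that the bound on $K_2$ holds). Since $s>0$, we have $h^s<1$, so we may take $C^*=|K_1|+K_2^*$.

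\emph{Lower bound.} By the reverse triangle inequality,
\[
	E(h)\ge |K_1|h^r - |K_2(h)|h^{r+s} \ge \big(|K_1| - K_2^*h^s\big)h^r .
\]
Choose $h_0=\big(|K_1|/(2K_2^*)\big)^{1/s}$, which is positive and finite because $K_1\ne0$ and $0<K_2^*<\infty$. For all $0<h<h_0$ we have $K_2^*h^s<|K_1|/2$, hence $E(h)\ge \tfrac{1}{2}|K_1|h^r$. Take $C_*=|K_1|/2>0$.

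\emph{Conclusion.} For all $h$ below $\min\{1,h_0\}$ and below the threshold on which $|K_2(h)|<K_2^*$, both inequalities hold. Together they give $E(h)=\Theta(h^r)$ in the sense of Definition~\ref{def:BigTheta}.

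There is no real obstacle here. The only care needed is to choose a single neighborhood of $0$ on which all three smallness conditions hold at once, and to use $K_1\neq 0$ so that $C_*$ is strictly positive.
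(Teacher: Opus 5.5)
Your proof is correct and follows essentially the same argument as the paper. The paper also uses the triangle inequality with $h\le 1$ to get $C^*=|K_1|+K_2^*$, and the reverse triangle inequality with $h\le (|K_1|/(2K_2^*))^{1/s}$ to get $C_*=|K_1|/2$; you are slightly more careful in explicitly restricting to the neighborhood where $|K_2(h)|<K_2^*$ holds.
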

\begin{proof}
From (\ref{eq:err_rs}), the upper bound on $K_2(h)$, and the triangle inequality we have
\[
	\frac{E(h)}{h^r} 
	\leq |K_1| + h^s|K_2^*|.
\]
Let 
\[
	C^* := |K_1| + |K_2^*|.
\]
It follows that 
\[
	\frac{E(h)}{h^r}  \leq C^*
\]
for all $0<h\le 1$.

From the reverse triangle inequality, we have
\[
	|K_1 + h^s K_2(h)| \geq |K_1| - h^s |K_2(h)|.
\]
Then for
\[
	0<h\le  \left(\frac{|K_1|}{2K_2^*}\right)^{1/s}
\]
we have
\[
	|K_1 + h^s K_2(h)| \geq |K_1| - \frac{|K_1|}{2K_2^*} |K_2(h)| \ge \frac{1}{2}|K_1|.
\]
With
\[
	C_* := \frac{|K_1|}{2},
\]
we can conclude that 
\[
	\frac{E(h)}{h^r} \geq C_*.
\]

In summary, for 
\[
	0< h \le \min\left\{ 1, \left(\frac{|K_1|}{2K_2^*}\right)^{1/s} \right\},
\]
we have shown that
\[
	C_* \le \frac{E(h)}{h^r} \le C^* \; \implies \; C_*h^r \le E(h) \le C^*h^r.
\]
\end{proof}

\section{Vandermonde Matrices}\label{app:Vander}

In this section we introduce some useful definitions and results pertaining to Vandermonder matrices.
See \cite{golub2013} for more on Vandermonde matrices.

\begin{definition}\label{eq:Vander}
A {Vandermonde matrix}, $V\in\R^{M\times N}$, is a matrix of the form
\[
	[V]_{mn} = x_n^{m-1}, \quad m=1,2,...,M, \; n=1,2,...,N,
\]
for some $\{x_n\}_{n=1}^{N}\subset\R$.
\end{definition}

\begin{lemma}
For a square Vandermonde matrix $V\in\R^{N\times N}$, with entries $[V]_{mn} = x_n^{m-1}$, the determinant is given by
\[
	\det(V) = \prod_{1\leq i < j\leq N}(x_j-x_i).
\]
It follows that $V$ is invertible if and only if $x_i\neq x_j$ for all $i,j=1,2,...,N$.
\end{lemma}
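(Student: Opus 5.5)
We prove the Vandermonde determinant formula by viewing $\det(V)$ as a polynomial in the last node and using induction on $N$.

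For $N=1$ the matrix is $[1]$. The product over pairs is empty, so it equals $1$, and the formula holds. Suppose the formula holds for $(N-1)\times(N-1)$ Vandermonde matrices.

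Fix distinct-or-not values $x_1,\dots,x_{N-1}$ and replace $x_N$ by a variable $t$. Define $p(t)=\det(V(x_1,\dots,x_{N-1},t))$. Expanding along the last column, whose entries are $1,t,\dots,t^{N-1}$, shows that $p$ is a polynomial in $t$ of degree at most $N-1$. The coefficient of $t^{N-1}$ is the cofactor of the bottom-right entry. That cofactor is the $(N-1)\times(N-1)$ Vandermonde determinant in $x_1,\dots,x_{N-1}$, which by the inductive hypothesis equals $\prod_{1\le i<j\le N-1}(x_j-x_i)$.

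For each $i\le N-1$, setting $t=x_i$ makes two columns equal, so $p(x_i)=0$. Suppose first that the $x_i$ are pairwise distinct. Then $p$ has the $N-1$ distinct roots $x_1,\dots,x_{N-1}$ and degree at most $N-1$. It follows that
\[
	p(t)=\Big(\prod_{1\le i<j\le N-1}(x_j-x_i)\Big)\prod_{i=1}^{N-1}(t-x_i).
\]
Setting $t=x_N$ gives $\prod_{1\le i<j\le N}(x_j-x_i)$, as claimed.

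If instead the $x_1,\dots,x_{N-1}$ are not distinct, both sides vanish. The determinant is zero because $V$ has two equal columns, and the product is zero because it contains a factor $x_j-x_i=0$. So the identity holds for all real nodes. Equivalently, one can treat both sides as polynomials in all the variables and use the distinct case as a Zariski-dense set.

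Invertibility now follows directly. $V$ is invertible exactly when $\det(V)\neq0$. A product of real numbers is nonzero exactly when every factor is nonzero, and here that means $x_i\neq x_j$ for all $i\neq j$.

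The only delicate point is identifying the leading coefficient of $p$ and handling degenerate nodes. The cofactor expansion above settles the first, and the case split handles the second. An alternative route is to apply the column operations $C_j\leftarrow C_j-x_1C_{j-1}$, working from right to left, to factor out $(x_j-x_1)$ from each column. We would use the root-counting argument as the main route.
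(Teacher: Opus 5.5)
Your proof is correct. The bottom-right cofactor carries sign $(-1)^{2N}=+1$ and is exactly the $(N-1)\times(N-1)$ Vandermonde determinant. In the distinct case the factorization of $p$ follows from the factor theorem plus comparison of the $t^{N-1}$ coefficients. The degenerate case is handled properly, and the sub-case where $x_1,\dots,x_{N-1}$ are distinct but $x_N$ coincides with one of them is already covered by evaluating the polynomial identity at $t=x_N$. You also rightly read the invertibility condition as $x_i\neq x_j$ for $i\neq j$, which is what the statement intends; the literal ``for all $i,j$'' is a typo.

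The paper gives no proof of this lemma. It states it as a standard fact and refers to \cite{golub2013}, so there is no competing argument to compare against, and your induction supplies a self-contained one.

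Your route buys one thing the citation does not. The same skeleton also proves the generalized Vandermonde Lemma~\ref{lem:genVander}, which the superconvergence proofs actually rely on. The skeleton is: expand the determinant as a function of the last node, identify the leading cofactor by induction, then count roots. Replace the degree bound by Descartes' rule of signs. Then $p(t)=\sum_m C_m t^{z_m}$ has at most $N$ nonzero terms and a nonzero top coefficient $C_N$ by induction. Hence it has at most $N-1$ positive roots, and these are used up by $x_1,\dots,x_{N-1}$. So $p(x_N)\neq 0$ whenever $0<x_1<\cdots<x_N$. This yields invertibility only, not a product formula, but it is what the paper needs.
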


The $A$ matrices that show up in our main theorems are all invertible, square Vandermonde matrices.
There are also $B$ matrices that we construct in our proofs that, although are not Vandermonde matrices in the traditional sense, are classified as generalized Vandermonde matrices with certain properties that guarantee invertibility, \cite{gantmacher1959,heineman1926}.

\begin{definition}\label{def:genVander}
A {generalized Vandermonde matrix}, $V\in\R^{M\times N}$, is a matrix of the form
\[
	[V]_{mn} = x_n^{z_m}, \quad m=1,2,...,M, \; n=1,2,...,N,
\]
for some $\{x_n\}_{n=1}^N\subset \R$ and $\{z_m\}_{m=1}^M\subset \N_0$ assuming
\[
	0\le z_1 < z_2 < \cdots < z_M.
\]
\end{definition}

\begin{lemma}\label{lem:genVander}
Let $V\in\R^{N\times N}$ be a generalized Vandermonde matrix with entries
\[
	[V]_{mn} = x_n^{z_m}, \quad m=1,2,...,M, \; n=1,2,...,N.
\] 
If $0<x_1 < x_2 < \cdots < x_N$, then $V$ is invertible.
\end{lemma}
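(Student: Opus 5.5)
Since $V$ is square (so $M=N$), it suffices to show that its rows are linearly independent. I would argue by contradiction using a root-counting argument for sparse polynomials. Suppose there is a nonzero $\mathbf w\in\R^N$ with $\mathbf w^T V = \mathbf 0$. Column $n$ of this identity reads $\sum_{m=1}^N w_m x_n^{z_m}=0$. So the "generalized polynomial"
\[
	p(x) := \sum_{m=1}^N w_m x^{z_m}
\]
vanishes at the $N$ distinct positive points $x_1<x_2<\cdots<x_N$. The rest of the proof shows that a nonzero such $p$, with at most $N$ nonzero terms and distinct exponents, has at most $N-1$ distinct zeros in $(0,\infty)$. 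This contradiction gives invertibility.

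The key step is the claim: \emph{if $p(x)=\sum_{m=1}^{J} w_m x^{z_m}$ with $0\le z_1<\cdots<z_J$ and all $w_m\neq 0$, then $p$ has at most $J-1$ distinct zeros in $(0,\infty)$.} First discard the zero coefficients of $\mathbf w$, so that $J\le N$. I would prove the claim by induction on $J$.
\begin{itemize}
\item For $J=1$, $p(x)=w_1x^{z_1}$ has no positive zeros.
\item For the inductive step, set $q(x)=x^{-z_1}p(x)=w_1+\sum_{m=2}^J w_m x^{z_m-z_1}$. On $(0,\infty)$, $q$ has exactly the same zeros as $p$.
\item If $q$ had $J$ distinct positive zeros, Rolle's theorem would give $q'$ at least $J-1$ distinct positive zeros, one strictly between each consecutive pair.
\item But $q'(x)=\sum_{m=2}^J w_m(z_m-z_1)x^{z_m-z_1-1}$. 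Its coefficients $w_m(z_m-z_1)$ are nonzero and its exponents are strictly increasing, so it has $J-1$ terms. By the induction hypothesis it has at most $J-2$ positive zeros, a contradiction.
\end{itemize}

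The exponent $z_m-z_1-1$ may fail to be a nonnegative integer when $z_m-z_1=0$. That case cannot occur for $m\ge2$, so the exponents are nonnegative integers and the induction stays within the class considered. Positivity of the nodes is essential here, because multiplying by $x^{-z_1}$ and applying Rolle are only legitimate on $(0,\infty)$. The lemma indeed fails for nodes of mixed sign, for example $x^2$ at $\pm1$. Equivalently, one could quote Descartes' rule of signs, which bounds the number of positive roots by the number of sign changes, itself at most $J-1$.

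Applying the claim with $J\le N$ shows that $p$ has at most $N-1$ positive zeros. This contradicts $p(x_n)=0$ for all $n=1,\dots,N$. Hence $\mathbf w=\mathbf 0$, the rows of $V$ are independent, and $V$ is invertible.

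The main obstacle is the Rolle induction, specifically checking that differentiation after dividing by $x^{z_1}$ reduces the term count by exactly one while staying in the same class. Everything else is bookkeeping.
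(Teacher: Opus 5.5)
Your proof is correct and is essentially the argument behind the paper's proof, which is only a citation to Gantmacher. There, too, a nontrivial row dependence yields a generalized polynomial $\sum_m w_m x^{z_m}$ with $N$ positive zeros, and this is ruled out by induction on the number of terms via Rolle's theorem applied to $\bigl(x^{-z_1}p(x)\bigr)'$; the cited source additionally obtains $\det V>0$ (indeed total positivity) by continuity in the exponents, which is not needed for invertibility.
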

\begin{proof}
See \cite{gantmacher1959}, example 1 in page 99, for a proof.
\end{proof}

\section*{Acknowledgments} 
 
 We would like to acknowledge Anthony Cortez, undergraduate alumni of Fresno State, for his initial work on this project.
In particular, Anthony had postulated the superconvergence theorem we have proved here along with the use of symmetric and skew-symmetric FD coefficients.
This project was funded by the US Department of Education through the STEAM: Enriched Pathways program, award number P031S200054.

 
 \newpage
{\footnotesize  
\medskip
\medskip
\vspace*{1mm} 
 
\noindent {\it Mario Javier Bencomo}\\  
California State University, Fresno,\\
Fresno, California, USA.\\
E-mail: {\tt bencomo@mail.fresnostate.edu}\\ \\  

\noindent {\it Joseph Igot}\\  
California State University, Fresno,\\
Fresno, California, USA.\\
E-mail: {\tt jigot@mail.fresnostate.edu}\\ \\  

\noindent {\it Emma Maltes}\\  
California State University, Fresno,\\
Fresno, California, USA.\\
E-mail: {\tt emmaltes@mail.fresnostate.edu}\\ \\  

}


\end{document}